\documentclass[11pt]{article}
\usepackage{amsmath, amssymb, amsthm, verbatim,enumerate,bbm}
\usepackage{indentfirst}

\title{Rainbow Hamilton cycles in random graphs and hypergraphs}

\author{ Asaf Ferber
\thanks{Department of Mathematics, Yale University, and Department of Mathematics, MIT. Emails:
asaf.ferber@yale.edu, and ferbera@mit.edu.}\and Michael Krivelevich
\thanks{School of Mathematical Sciences, Raymond and Beverly
Sackler Faculty of Exact Sciences, Tel Aviv University, Tel Aviv,
6997801, Israel. Email: krivelev@post.tau.ac.il. Research supported in
part by USA-Israel BSF grant 2010115 and by grant 912/12 from the Israel
Science Foundation.}}

\date{\today}
\parindent 5mm
\parskip 0.2mm
\oddsidemargin  0pt \evensidemargin 0pt \marginparwidth 0pt
\marginparsep 0pt \topmargin 0pt \headsep 0pt \textheight 8.8in
\textwidth 6.6in

\allowdisplaybreaks

\theoremstyle{plain}
\newtheorem{theorem}{Theorem}[section]
\newtheorem{lemma}[theorem]{Lemma}

\newtheorem{remark}[theorem]{Remark}
\newtheorem{definition}[theorem]{Definition}

\newcommand{\Bin}{\ensuremath{\textrm{Bin}}}

\begin{document}
\maketitle

\begin{abstract}
Let $H$ be an edge colored hypergraph. We say that $H$ contains a
\emph{rainbow} copy of a hypergraph $S$ if it contains an isomorphic
copy of $S$ with all edges of distinct colors.

We consider the following setting. A randomly edge colored random
hypergraph $H\sim \mathcal H_c^k(n,p)$ is obtained by adding each
$k$-subset of $[n]$ with probability $p$, and assigning it a color
from $[c]$ uniformly, independently at random.

As a first result we show that a typical $H\sim \mathcal H^2_c(n,p)$
(that is, a random edge colored graph) contains a rainbow Hamilton
cycle, provided that $c=(1+o(1))n$ and $p=\frac{\log n+\log\log
n+\omega(1)}{n}$. This is asymptotically best possible with respect
to both parameters, and improves a result of Frieze and Loh.

Secondly, based on an ingenious coupling idea of McDiarmid, we
provide a general tool for tackling problems related to finding
``nicely edge colored" structures in random graphs/hypergraphs. We
illustrate the generality of this statement by presenting two
interesting applications. In one application we show that a typical
$H\sim \mathcal H^k_c(n,p)$ contains a rainbow copy of a hypergraph
$S$, provided that $c=(1+o(1))|E(S)|$ and $p$ is (up to a
multiplicative constant) a threshold function for the property
of containment of a copy of $S$. In the second application we show that
a typical $G\sim \mathcal H_{c}^2(n,p)$ contains $(1-o(1))np/2$ edge
disjoint Hamilton cycles, each of which is rainbow, provided that
$c=\omega(n)$ and $p=\omega(\log n/n)$.

\end{abstract}

\section{Introduction}

We consider the following model of edge-colored random $k$-uniform
hypergraphs. Let $p\in [0,1]$ and let $c$ be a positive integer. Then we define
$\mathcal H^k_c(n,p)$ to be the probability space of edge-colored
$k$-uniform hypergraphs with vertex set $[n]:=\{1,\ldots,n\}$,
obtained by first choosing each $k$-tuple $e\in \binom{[n]}{k}$ to
be an edge independently with probability $p$ and then by coloring each
chosen edge independently and uniformly at random with a color from
the set $[c]$. For example, the case $k=2$ reduces to the standard binomial graph $\mathcal G(n,p)$, whose edges are colored at random in $c$ colors. In the special case where $c=1$, we write $\mathcal
H^k(n,p) := \mathcal H^k_c(n,p)$, and observe that this is just the
standard binomial random hypergraph model. For $H\sim \mathcal
H^k_c(n,p)$ and a hypergraph $S$, we say that $H$ contains a
\emph{rainbow} copy of $S$ if $H$ contains an isomorphic copy of
$S$ with all edges in distinct colors. A frequent theme in recent
research is to determine the conditions on $p$ and $c$ under which a
random hypergraph $H\sim \mathcal H^k_c(n,p)$ contains, with high
probability (w.h.p.), a rainbow copy of a given hypergraph $S$.

Let us first discuss the case $k=2$ of binomial random graphs.
Perhaps two of the most natural properties to address are when $S$
is a perfect matching or $S$ is a Hamilton cycle. Note that in these
cases we need $c\geq n/2$ and $c\geq n$, respectively. Moreover, it
is well known (see e.g., \cite{Bol}) that a perfect matching
(respectively a Hamilton cycle) starts to appear in a typical $G\sim
\mathcal G(n,p)$ whenever $p=\frac{\log n+\omega(1)}{n}$
(respectively, $p=\frac{\log n+\log\log n+\omega(1)}{n}$), and
therefore we can restrict ourselves to these regimes. In
\cite{cooper2002multi}, Cooper and Frieze showed that for $p\approx
42\log n /n$ and $c=21n$, a graph $G\sim \mathcal G_c(n,p)$
typically contains a rainbow Hamilton cycle. Later on, Frieze and
Loh~\cite{frieze2014rainbow} improved this to $p=\frac{(1+o(1))\log
n}{n}$ and $c =(1+o(1))n$, which is asymptotically optimal with
respect to both of the parameters $p$ and $c$. Recently, Bal and
Frieze \cite{bal2013rainbow} obtained the optimal $c$ by showing
that for $p=\omega(\log n/n)$ and $c = n/2$ (respectively $c=n$), a
graph $G\sim \mathcal G_c(n,p)$ w.h.p.\ contains a rainbow perfect
matching (respectively a rainbow Hamilton cycle). For general
graphs, Ferber, Nenadov and Peter \cite{ferber2013universality}
showed that for every graph $S$ on $n$ vertices with  maximum degree
$\Delta(S)$ and for $c=(1+o(1))e(S)$, a typical $G\sim \mathcal
G_c(n,p)$ contains a rainbow copy of $S$, provided that $p=
n^{-1/\Delta(S)} \text{polylog}(n)$ (here, as elsewhere, $e(S)$
denotes the number of edges in $S$). In this case, the number of
colors $c$ is asymptotically optimal, whereas the edge probability
$p$ is almost certainly not.

Our first main result improves the main theorem of Frieze and Loh
from \cite{frieze2014rainbow} to $p=\frac{\log n+\log\log
n+\omega(1)}n$, which is clearly optimal. Our proof technique is
completely different, resulting in a shorter proof than the one given in
\cite{frieze2014rainbow}.

\begin{theorem}
  \label{main}
    Let $\varepsilon>0$, let $c=(1+\varepsilon)n$ and let $p=\frac{\log n+\log\log
    n+\omega(1)}{n}$.  Then a graph $G\sim \mathcal G_c(n,p)$
    w.h.p.\ contains a rainbow Hamilton cycle.
\end{theorem}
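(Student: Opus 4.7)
The plan is to combine a two-round exposure with a rainbow adaptation of Pósa's rotation-extension technique. Write $p = p_1 + p_2 - p_1 p_2$ with $p_1 = (\log n + \log\log n + \omega_1(1))/n$ still above the Hamilton threshold and $p_2 = \omega_2(1)/n$ a small sprinkling term, where $\omega_1,\omega_2\to\infty$ and $\omega_1+\omega_2=\omega(1)$. Generate $G_1\sim\mathcal G_c(n,p_1)$ and $G_2\sim\mathcal G_c(n,p_2)$ independently; their union has the same distribution as $G\sim\mathcal G_c(n,p)$ (with ties broken by, say, the $G_1$ color). All structural work is done in $G_1$, while $G_2$ (with its independent random colors) is held in reserve to close the final rainbow Hamilton path into a cycle.

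Standard facts about $\mathcal G(n,p_1)$ at this threshold give, with high probability, $\delta(G_1)\ge 2$ together with Pósa's expansion property: every $S\subseteq V$ with $|S|\le n/4$ satisfies $|N_{G_1}(S)\setminus S|\ge 2|S|$. Starting from a single vertex, I would greedily build a rainbow path in $G_1$ by extending an endpoint via an edge whose color is not already on the path; if no such extension exists, apply a \emph{rainbow rotation}, i.e., a Pósa rotation whose added edge carries a color not on the current path except possibly the rotated-out color. Because a rainbow path uses at most $n$ of the $c=(1+\varepsilon)n$ colors, at any moment an $\varepsilon/(1+\varepsilon)-o(1)$ fraction of the potential rotations are color-valid; absorbing this constant-factor loss into Pósa's boosters argument should yield a rainbow Hamilton path with high probability.

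To close the rainbow Hamilton path $P$ into a rainbow Hamilton cycle, I would continue performing rainbow rotations at both endpoints of $P$ to generate two linear-sized sets $S_0,S_1$ of alternative endpoints, producing $\Omega(n^2)$ candidate endpoint pairs each of which still admits a rainbow Hamilton path on $V(G)$. For each such pair the closing edge slot is independently present in $G_2$ with probability $p_2$, and given presence carries a color not on $P$ with probability at least $\varepsilon/(1+\varepsilon)-o(1)$. Since $p_2\cdot n^2=\omega(n)$, a routine first-moment/Chernoff bound shows that some candidate slot is realized with a fresh color w.h.p., closing the path into a rainbow Hamilton cycle.

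The main technical obstacle I anticipate is the dynamic bookkeeping of colors through the rainbow rotations: each rotation swaps one edge (hence one color) in and out of the path, so the ``forbidden'' color set moves with every step and one must show that the pool of color-valid moves never collapses. A plausible remedy is to batch rotations and refresh the palette after each batch, or to restrict attention to rotations whose new edges use colors from a small pre-selected subset disjoint from an ``interior'' palette of $P$. A secondary subtlety is that $\delta(G_1)$ is only barely at least $2$ at this threshold, so vertices of degree exactly $2$ have both of their incident edges forced into the cycle; one must check that the forced edges carry distinct colors, which holds with high probability by a union bound over the $o(n)$ such vertices.
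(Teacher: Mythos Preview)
Your outline is a reasonable high-level plan, but it is not a proof, and the paper's argument proceeds quite differently. The decisive gap is the ``rainbow rotation'' step. At $p_1=(\log n+\log\log n+\omega_1(1))/n$ each vertex has only $\Theta(\log n)$ neighbours, and the colours on those edges are fixed once $G_1$ is revealed. Your assertion that an $\varepsilon/(1+\varepsilon)-o(1)$ fraction of rotations are colour-valid is a heuristic average; to drive P\'osa's argument you would need it to hold simultaneously at every node of the (adaptively built) rotation tree, for a path whose colour set was itself produced by earlier rotations. The subgraph of $G_1$ restricted to the $\approx\varepsilon n$ currently unused colours has effective density about $\frac{\varepsilon}{1+\varepsilon}\,p_1$, which is well below the expansion threshold, so there is no a priori reason the colour-restricted graph expands at all. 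You flag this yourself as the main obstacle; neither remedy you propose (batching, or a pre-selected colour pool disjoint from an interior palette) is developed, and the second, done carefully, essentially forces you into the paper's scheme. The same difficulty recurs when you try to generate linear-size endpoint sets $S_0,S_1$ for the closing step.

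The paper avoids dynamic colour bookkeeping altogether. It first sets aside a vertex set $W$ of size $\approx n/\log\log n$ and two disjoint colour classes $\mathcal C_0,\mathcal C_1$ of size $\approx\alpha n$ (Lemma~\ref{lemma:vertex partition}). A long rainbow path $P$ covering all but $O(n/\log^{0.4}n)$ vertices of $[n]\setminus W$ is then found by a DFS/pseudorandomness argument (Lemmas~\ref{Gnp is k rainbow pseudo} and~\ref{DFS}), using only colours outside $\mathcal C_0\cup\mathcal C_1$; no rotations are used here. The leftover vertices are attached into $W$ by a rainbow star-matching in colours $\mathcal C_1$ (Lemma~\ref{lemma:ugly}, via Aharoni--Haxell), and inside $W$ a sparse rainbow $(k,100)$-expander $R$ is extracted in colours $\mathcal C_0$ (Lemma~\ref{RainbowExpander}, via the Local Lemma). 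Only now is the booster machinery invoked, and solely on the $O(n/\log\log n)$-vertex graph $G_1=R\cup M'\cup E_0$: since the reserved palette still contains $\Theta(n)$ unused colours while $G_1$ needs only $O(n/\log\log n)$ boosters, Lemma~\ref{rainbow boosters} supplies a fresh-coloured $e$-booster at every step. Low-degree vertices are handled up front exactly as you suggest, via the set $SMALL$ and properties $(P4)$--$(P5)$. The key idea, then, is to decouple the colour constraints from the rotation process by pre-partitioning both vertices and colours, so that each phase works in its own palette and no adaptive colour tracking is ever required.
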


Next, building upon an ingenious coupling idea of McDiarmid
\cite{McD}, we give a general statement regarding the problem of
finding ``nice" structures in randomly edge-colored random
hypergraphs. Then, we exhibit its applications to derive
interesting corollaries. Before doing so, let us introduce some
useful notation. For an integer $c$, suppose that $\mathcal
C:=\mathcal C(c,n,k)$ is a collection of edge-colored $k$-uniform
hypergraphs on the same vertex set $[n]$, whose edge set is colored
with colors from $[c]$. We say that $\mathcal C$ is
\emph{$\ell$-rich} if for any $C\in \mathcal C$ and for any $e\in
E(C)$ there are at least $\ell$ distinct ways to color $e$ in order
to obtain an element of $\mathcal C$. For example, consider the case
where $k=2$ and $c=n_1\geq n$, and let $\mathcal C(c,n,k)$ be the
set of all possible rainbow Hamilton cycles in $K_n$. Note that for
each $C\in \mathcal C$ and for every $e\in E(C)$, there are
$n_1-n+1$ ways to color $e$ in order to obtain a rainbow Hamilton
cycle. Therefore, $\mathcal C$ is $(n_1-n+1)$-rich. Now, given a
collection of edge-colored hypergraphs $\mathcal C$, we define
$\widetilde{\mathcal C}$ to be the set of all hypergraphs obtained
by taking $C\in \mathcal C$ and deleting the colors from its edges.
With this notation in hand we can state the following theorem:

\begin{theorem}
  \label{main2}
Let $p:=p(n)\in [0,1]$ and let $\ell, c$ be positive integers for
which $q:=\frac{cp}{\ell}\leq1$. Let $k$ be any positive integer and
let $\mathcal C:=\mathcal C(c,n,k)$ be any $\ell$-rich set. Then, we
have
$$\Pr\left[H\sim \mathcal H^k(n,p) \textrm{ contains some } C\in \widetilde{\mathcal C}\right]\leq
\Pr\left[H\sim H^k_c(n,q) \textrm{ contains some } C\in \mathcal C
\right].$$

\end{theorem}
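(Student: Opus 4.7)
The plan is to construct an explicit coupling between a random hypergraph $G \sim \mathcal{H}^k(n, p)$ and a randomly edge-colored random hypergraph $(H, \phi) \sim \mathcal{H}_c^k(n, q)$ such that, in the joint probability space, the event that $G$ contains an element of $\widetilde{\mathcal{C}}$ is contained in the event that $H$ contains an element of $\mathcal{C}$. Once such a coupling is in place, the desired probability inequality drops out immediately.

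To set up the coupling, for each $k$-subset $e \in \binom{[n]}{k}$ I would sample independent uniform variables $U_e \in [0,1]$ and $V_e \in [c]$. Declare $e \in H$ iff $U_e \leq q$ and set $\phi(e) = V_e$; this clearly gives $(H, \phi) \sim \mathcal{H}_c^k(n, q)$. To obtain $G$, I would introduce additional independent randomness assigning to each $e$ an $\ell$-element subset $S_e \subseteq [c]$, and place $e \in G$ iff $e \in H$ and $V_e \in S_e$. A direct computation yields $\Pr[e \in G] = q \cdot \ell / c = p$ and these events are independent across $e$, so the marginal law of $G$ is indeed $\mathcal{H}^k(n, p)$. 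The real work is to arrange the $S_e$'s so that whenever $G$ contains some $G_0 \in \widetilde{\mathcal{C}}$, the induced coloring $(V_e)_{e \in E(G_0)}$ is automatically a coloring placing $G_0$ in $\mathcal{C}$. This is exactly where $\ell$-richness enters: for every $C \in \mathcal{C}$ and every $e \in E(C)$ there are at least $\ell$ colors one can assign to $e$ without leaving $\mathcal{C}$, so the ``safe'' colors form a natural candidate for $S_e$.

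The main obstacle I anticipate is that $\ell$-richness is phrased one $C$ at a time, while a given edge $e$ may lie in many copies $G_0 \in \widetilde{\mathcal{C}}$ and the set of safe colors may in principle differ across these copies. Making a single (possibly random) choice of $S_e$ serve all copies simultaneously is precisely the delicate point where the McDiarmid coupling trick is needed; I expect this gluing step, and not the marginal edge calculations, to be the heart of the proof.
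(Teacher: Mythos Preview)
Your proposal correctly identifies the key difficulty but does not resolve it, and the framework you set up cannot be completed as stated. With $S_e$ sampled independently of the color variables $(V_{e'})_{e'}$, conditioning on $G_0\subseteq G$ only forces $V_e\in S_e$ for each $e\in E(G_0)$; it imposes no constraint linking the colors of different edges. Take $\mathcal C$ to be the rainbow Hamilton cycles: for a fixed Hamilton cycle $G_0$, the colors $(V_e)_{e\in E(G_0)}$ remain i.i.d.\ uniform after this conditioning, so they are rainbow only with probability strictly less than $1$. Hence the containment of events you need simply fails. Allowing $S_e$ to depend on $(V_{e'})_{e'\ne e}$ does not obviously help either: the ``safe'' $\ell$-set for $e$ depends on which $G_0\in\widetilde{\mathcal C}$ one is looking at and on the colors of the other edges of that particular $G_0$, and a single edge typically lies in many such $G_0$ with incompatible safe sets; moreover you would then have to re-establish that the events $\{e\in G\}$ are mutually independent, which is no longer automatic.

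The McDiarmid idea the paper uses is not a global coupling of this type at all; it is a sequential interpolation. One enumerates the potential edges $e_1,\ldots,e_N$ and defines intermediate models $\Gamma_0,\ldots,\Gamma_N$: in $\Gamma_i$ the edges $e_1,\ldots,e_i$ are present with probability $q$ and receive a single uniform color, while $e_{i+1},\ldots,e_N$ are present with probability $p$ and carry all $c$ colors simultaneously. Then $\Gamma_0\sim\mathcal H^k(n,p)$ (with every edge fully colored, so containing some $C\in\mathcal C$ is equivalent to containing some $C\in\widetilde{\mathcal C}$) and $\Gamma_N\sim\mathcal H^k_c(n,q)$. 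To pass from $\Gamma_{i-1}$ to $\Gamma_i$, one conditions on everything except $e_i$; if $e_i$ is pivotal, $\ell$-richness guarantees at least $\ell$ good colors for $e_i$, so the success probability is at least $q\cdot \ell/c=p$. The point is that $\ell$-richness is invoked \emph{after} all other edges and colors are fixed, which sidesteps exactly the global consistency problem your coupling runs into. If you want to salvage your approach, you should abandon the single-shot coupling and argue edge by edge in this way.
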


Next we present two interesting applications
for Theorem \ref{main2}, combining it with known results. First, we show that one can find a rainbow
Hamilton cycles in a random hypergraph with an optimal (up to a
multiplicative constant) edge density when working with the
approximately optimal number of colors. Second, we present
an application which is somewhat different in nature. We show that one
can find ``many" edge-disjoint Hamilton cycles, each of which is
rainbow in a random graph. Before stating it formally, we need the
following definition. Let $H$ be a $k$-uniform hypergraph on $n$
vertices. For $0\leq \ell< k$ we define a Hamilton $\ell$-cycle as a
cyclic ordering of $V(H)$ for which the edges consist of $k$
consecutive vertices, and for each two consecutive edges $e_i$ and
$e_{i+1}$ we have $|e_i\cap e_{i+1}|=\ell$ (where we consider
$n+1=1$). It is easy to show that a Hamilton $\ell$-cycle contains
precisely $m_\ell:=\frac{n}{k-\ell}$ edges and therefore we cannot
expect to have one unless $n$ is divisible by $k-\ell$. (Note that we can
consider a perfect matching as a Hamilton $0$-cycle.)

The problem of finding the threshold for the existence of Hamilton
$\ell$-cycles in random hypergraphs has drawn a lot of attention
in the last decade. Among the many known results, it is worth
mentioning the one of Johansson, Kahn and Vu \cite{JKV}, who showed
that $p=\Theta(\log n/n^{k-1})$ is a threshold for the appearance of
a Hamilton $0$-cycle (that is, a perfect matching) in a typical
$H\sim \mathcal H^k(n,p)$, provided that $n$ is divisible by $k$. In
general, for every $\ell<k$, the threshold for the appearance of a
Hamilton $\ell$-cycle in a typical $H\sim \mathcal H^k(n,p)$
(assuming that $n$ is divisible by $k-\ell$) is around $p\approx
\frac{1}{n^{k-\ell}}$ (in some cases an extra $\log$ factor
appears). For more details we refer the reader to \cite{DF} and its
references.

Now we are ready to state our next result:

\begin{theorem}
  \label{app1}
  Let $0\leq \ell<k$ be two integers, and let $p\in [0,1]$ be such
  that
  $$\Pr\left[H\sim \mathcal H^k(n,p) \text{ contains a Hamilton
  }\ell\text{-cycle}\right]=1-o(1).$$
  Then, for every $\varepsilon>0$, letting $c=(1+\varepsilon)m_\ell$ and
  $q=\frac{cp}{\varepsilon m_\ell+1}$ we have
  $$\Pr\left[H\sim \mathcal H^k_c(n,q) \text{ contains a rainbow
  Hamilton }\ell\text{-cycle}\right]=1-o(1).$$
\end{theorem}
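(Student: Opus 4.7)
The plan is to apply Theorem~\ref{main2} essentially verbatim, with the right choice of the $\ell$-rich collection. Let $\mathcal{C}=\mathcal{C}(c,n,k)$ denote the collection of all rainbow Hamilton $\ell$-cycles on vertex set $[n]$ with colors from $[c]$, where $c=(1+\varepsilon)m_\ell$. Then $\widetilde{\mathcal{C}}$ is simply the collection of all (uncolored) Hamilton $\ell$-cycles on $[n]$, so $H\sim \mathcal{H}^k(n,p)$ contains an element of $\widetilde{\mathcal{C}}$ if and only if it contains a Hamilton $\ell$-cycle, which by hypothesis happens with probability $1-o(1)$.

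The first key step is to compute the richness of $\mathcal{C}$. A Hamilton $\ell$-cycle contains exactly $m_\ell$ edges, and a rainbow copy uses $m_\ell$ distinct colors. Given any $C\in\mathcal{C}$ and any edge $e\in E(C)$, to keep the coloring rainbow we may recolor $e$ with any of the $c-(m_\ell-1)=c-m_\ell+1$ colors not already appearing on the other $m_\ell-1$ edges. Plugging in $c=(1+\varepsilon)m_\ell$, this yields $\varepsilon m_\ell+1$ valid colors for $e$. Hence $\mathcal{C}$ is $(\varepsilon m_\ell+1)$-rich.

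Now I would apply Theorem~\ref{main2} with richness parameter $\varepsilon m_\ell+1$. Its prescribed coupling probability is then $q=\frac{cp}{\varepsilon m_\ell+1}$, which matches the theorem statement. Before invoking it, one must verify the hypothesis $q\leq 1$: since $\frac{c}{\varepsilon m_\ell+1}=\frac{(1+\varepsilon)m_\ell}{\varepsilon m_\ell+1}$ is bounded above by $\frac{1+\varepsilon}{\varepsilon}$, a constant, and since $p$ is at most of the order of the threshold $n^{-(k-\ell)}$ (possibly times a $\log$ factor), we have $q=o(1)\leq 1$ for $n$ large. Theorem~\ref{main2} then gives
\[
1-o(1)=\Pr\!\left[H\sim \mathcal{H}^k(n,p) \text{ contains a Hamilton $\ell$-cycle}\right]\leq \Pr\!\left[H\sim \mathcal{H}^k_c(n,q)\text{ contains a rainbow Hamilton $\ell$-cycle}\right],
\]
which is exactly the desired conclusion.

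Given that Theorem~\ref{main2} does all the heavy lifting, there is no real obstacle to overcome here; the only subtle point is the bookkeeping of the richness constant $c-m_\ell+1=\varepsilon m_\ell+1$, which is what forces the particular expression $q=\frac{cp}{\varepsilon m_\ell+1}$ in the statement.
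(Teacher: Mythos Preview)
Your argument is correct and matches the paper's approach exactly: the paper likewise takes $\mathcal{C}$ to be all rainbow copies, computes the richness $c-(m_\ell-1)=\varepsilon m_\ell+1$, and invokes Theorem~\ref{main2} (stated there more generally as Theorem~\ref{rainbow spanning} for an arbitrary target hypergraph $S$ with $m$ edges, and then specialized). One small caveat: your verification that $q\le 1$ assumes $p$ is near the threshold, which the hypothesis of Theorem~\ref{app1} does not actually force; the paper sidesteps this by simply carrying ``if $q\le 1$'' as an explicit hypothesis in the general statement.
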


{\bf Remark:} Note that we allow to take $\varepsilon$ to be a
function of $n$ (or even $0$) in the statement above. Moreover, if we take $\varepsilon$ to
be a constant, then in particular we see that by losing a
multiplicative constant in the threshold,  a rainbow Hamilton
$\ell$-cycle w.h.p. exists. This for example reproves and extends
the first result obtained by Cooper and Frieze
\cite{cooper2002multi} in a very concise way.

The second application is regarding the problem of finding many
rainbow edge-disjoint Hamilton cycles in a typical $G\sim \mathcal
G(n,p)$. The analogous problem without the rainbow requirements is well
studied and quite recently, completing a long sequence of papers, Knox,  K\"uhn
and Osthus \cite{knox2013edge},
Krivelevich and Samotij \mbox{
\cite{krivelevich2012optimal}
}
and  K\"uhn and Osthus \cite{kuhn2014hamilton}
 solved this question for the entire
range of $p$. Combining these results with Theorem \ref{main2} we can
in particular obtain the following:

\begin{theorem}
  \label{rainbow packing}
 For every $0<\varepsilon<1$ there exists $C:=C(\varepsilon)>0$ such that for every $p\geq \omega(\log
 n/n)$ and $c=Cn$ the following holds:
 $$\Pr\left[G\sim \mathcal G_c(n,p) \text{contains }
 (1-\varepsilon)np/2 \text{ edge disjoint rainbow Hamilton cycles}\right]=1-o(1).$$
\end{theorem}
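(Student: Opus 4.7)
The plan is to apply Theorem~\ref{main2} to the family $\mathcal{C}$ of edge-colored graphs on $[n]$ that decompose into $t:=\lceil (1-\varepsilon)np/2\rceil$ edge-disjoint Hamilton cycles, each of which is rainbow in the colors $[c]$. Choose any integer $C\geq 2/\varepsilon$ (this will be the constant asserted in the theorem), set $c:=Cn$ and $\ell:=(C-1)n+1$. I claim $\mathcal{C}$ is $\ell$-rich: given $H\in\mathcal{C}$, fix a witnessing decomposition $H=H_1\cup\cdots\cup H_t$; for any edge $e\in E(H_i)$, recoloring $e$ with any one of the $c-(n-1)=\ell$ colors missing from the other $n-1$ edges of $H_i$ preserves membership in $\mathcal{C}$ (as the same decomposition continues to witness it).

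Now fix $p=\omega(\log n/n)$ as in the statement, set the colored probability to $q:=p$ in Theorem~\ref{main2}, and let $p_0:=\ell p/c$, so that $q=cp_0/\ell\leq 1$ and
\[
p_0=\frac{(C-1)n+1}{Cn}\,p=\Bigl(1-\tfrac{1}{C}+o(1)\Bigr)p=\omega(\log n/n).
\]
Since $c\geq n$, any graph containing $t$ edge-disjoint Hamilton cycles can be rainbow-colored to give a member of $\mathcal{C}$, so $\widetilde{\mathcal{C}}$ is precisely the collection of graphs carrying $t$ edge-disjoint Hamilton cycles. By the packing results of Knox--K\"uhn--Osthus~\cite{knox2013edge}, Krivelevich--Samotij~\cite{krivelevich2012optimal} and K\"uhn--Osthus~\cite{kuhn2014hamilton}, a typical $G\sim\mathcal G(n,p_0)$ contains $\lfloor\delta(G)/2\rfloor=(1+o(1))np_0/2$ edge-disjoint Hamilton cycles, and for $C\geq 2/\varepsilon$ this count is at least $(1-\varepsilon)np/2=t$ for large $n$. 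Hence the left-hand side of Theorem~\ref{main2} is $1-o(1)$, and the inequality transfers the conclusion directly to the rainbow setting on the right-hand side.

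The main obstacle is essentially parameter-matching: one must ensure $(1-1/C)(1-o(1))\geq 1-\varepsilon$, which forces $C$ to be taken slightly larger than $1/\varepsilon$ with enough slack to absorb the $o(1)$ coming from both $\delta(G(n,p_0))\approx np_0$ and the $p/(Cn)$ correction in $p_0$. The only genuinely subtle point is that membership in $\mathcal{C}$ is defined through the existence of a rainbow \emph{decomposition} rather than a property of the colored edge set itself, so richness must be verified by fixing one witnessing decomposition and using it both before and after the local recoloring. Once this is observed, no further combinatorial input beyond Theorem~\ref{main2} and the cited optimal packing theorems is required.
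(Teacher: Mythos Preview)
Your proposal is correct and follows essentially the same route as the paper: both define $\mathcal{C}$ to be the family of colored graphs that split into the required number of edge-disjoint rainbow Hamilton cycles, verify $\ell$-richness with $\ell=(C-1)n+1$ via the same ``only $n-1$ colors are blocked in the cycle containing $e$'' argument, and then feed this into Theorem~\ref{main2} together with the optimal packing results of \cite{knox2013edge,krivelevich2012optimal,kuhn2014hamilton}. The only cosmetic difference is the direction of the parameter matching: you fix the colored probability $q$ to equal the $p$ of the theorem statement and back-solve for the uncolored probability $p_0=\ell p/c$, whereas the paper keeps the uncolored probability and computes $q$, translating back at the end; your bookkeeping is arguably cleaner.
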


{\bf Notation.} Our graph theoretic notation is quite standard and
mainly follows that of \cite{West}. For $p\in [0,1]$ we let
$\mathcal H^k(n,p)$ denote the probability space of $k$-uniform
hypergraphs on vertex set $[n]$, obtained by adding each possible
$k$-subset of $[n]$ as an edge with probability $p$, independently
at random. In the special case $k=2$, we denote $\mathcal
G(n,p):=\mathcal H^2(n,p)$, the well studied binomial random graph
model. For an integer $c$, we let $\mathcal H^k_c(n,p)$ be the
probability space of edge-colored $k$-uniform hypergraphs on vertex
set $[n]$ obtained as follows. First, take $H\sim \mathcal
H^k(n,p)$, and then, to each edge, assign a color from $\mathcal
C:=[c]$ uniformly, independently at random. As before, in the case
$k=2$ we write $\mathcal G_c(n,p):=\mathcal H^2_c(n,p)$. Given a
subhypergraph $H'$ of an edge-colored hypergraph $H$, we say that
$H$ is \emph{rainbow} if all its edges receive distinct colors. For
a vertex $v\in V(H)$ we denote by $d_H^c(v)$ its \emph{color}
degree, that is, the number of distinct colors appearing on edges
incident to $v$. For an edge $e\in E(H)$, we let $c(e)$ denote its
color. Given a subset of vertices $W\subseteq V(H)$ and a subset of
colors $\mathcal C_0\subseteq \mathcal C$, we let $H[W;\mathcal
C_0]$ denote the subhypergraph of $H$ on a vertex set $W$ for which
$e\in \binom{W}{k}\cap E(H)$ is an edge of $H[W;\mathcal C_0]$ if
and only if $c(e)\in \mathcal C_0$. In case that $G$ is a graph,
given two disjoint subsets of vertices $S,W\subseteq V(G)$ and a
subset of colors $\mathcal C_0\subseteq \mathcal C$, we let
$G[S,W;\mathcal C_0]$ denote the bipartite subgraph of $G$ with
parts $S$ and $W$, and edges $sw\in E(G)$, where $s\in S$, $w\in W$
and $c(sw)\in \mathcal C_0$. Moreover, for two disjoint subsets
$S,W\subseteq V(G)$ and an integer $D$, we say that $G$ contains a
$D$-matching from $S$ to $W$ if there exists a rainbow subgraph $M$
of $G$ such that $d_M(s,W)=D$ for every $s\in S$ and $d_M(w)\leq 1$
for every $w\in W$.


We will frequently omit rounding signs for the sake of clarity of
presentation.

\section{Tools}\label{sec:tools}

In this section we introduce some tools and auxiliary results to be
used in our proofs.

\subsection{Probabilistic tools}

We will routinely employ bounds on large deviations of random
variables. We will mostly use the following well-known bound on the
lower and the upper tails of the binomial distribution due to
Chernoff (see \cite{AlonSpencer}, \cite{JLR}).

\begin{lemma}[Chernoff's inequality]
    \label{Chernoff}
    Let $X \sim \operatorname{Bin} (n, p)$ and let
    $\mu = \mathbb{E}(X)$. Then
    \begin{itemize}
        \item $\Pr[X < (1 - a)\mu] < e^{-a^2\mu/2}$ for every $a > 0$;
        \item $\Pr[X > (1 + a)\mu] < e^{-a^2\mu/3}$ for every $0 < a < 3/2$.
    \end{itemize}
\end{lemma}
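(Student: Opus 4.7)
The plan is to prove both tail bounds by the standard exponential-moment (Chernoff) method. Writing $X = X_1 + \cdots + X_n$ with independent $X_i \sim \mathrm{Bernoulli}(p)$, I apply Markov's inequality to the non-negative variable $e^{tX}$ (for $t>0$) to obtain $\Pr[X \geq s] \leq e^{-ts}\,\mathbb{E}[e^{tX}]$, and analogously $\Pr[X \leq s] \leq e^{ts}\,\mathbb{E}[e^{-tX}]$. By independence, $\mathbb{E}[e^{\pm tX}] = \prod_i \mathbb{E}[e^{\pm tX_i}]$, and the one-coordinate estimate $\mathbb{E}[e^{\pm tX_i}] = 1 + p(e^{\pm t} - 1) \leq \exp\!\left(p(e^{\pm t} - 1)\right)$ (via $1+x \leq e^x$) gives the master moment generating function bound
$$\mathbb{E}[e^{\pm tX}] \leq \exp\!\left(\mu(e^{\pm t}-1)\right),$$
which drives both halves of the argument.

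For the upper tail with $s=(1+a)\mu$, this yields $\Pr[X > (1+a)\mu] \leq \exp\!\left(\mu(e^t-1) - t(1+a)\mu\right)$ for every $t>0$. I would minimize the exponent by choosing $t=\log(1+a)>0$, which is the standard stationary point and produces the classical closed form
$$\Pr[X > (1+a)\mu] \leq \left(\frac{e^a}{(1+a)^{1+a}}\right)^{\mu}.$$
Taking logarithms, the target $e^{-a^2\mu/3}$ reduces to the real-variable inequality $(1+a)\log(1+a) - a \geq a^2/3$ on $(0, 3/2)$, which I would verify by a short calculus argument comparing Taylor coefficients at $0$ and checking the sign of the derivative on the interval. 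The restriction $a<3/2$ in the lemma is precisely the sharp range in which this last inequality with constant $1/3$ holds.

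The lower tail is completely analogous. With $s=(1-a)\mu$ and the choice $t=-\log(1-a)>0$ (the claim being vacuous for $a\geq 1$), the same scheme produces $\Pr[X < (1-a)\mu] \leq \bigl(e^{-a}/(1-a)^{1-a}\bigr)^{\mu}$, and the desired bound $e^{-a^2\mu/2}$ is equivalent to $(1-a)\log(1-a) + a \geq a^2/2$ on $(0,1)$. The main — and essentially only — obstacle is the clean verification of these two one-dimensional inequalities with the sharp constants $1/3$ and $1/2$: the probabilistic setup above is entirely routine, but a sloppy calculus step is exactly where the stated constants tend to get degraded, so I would present the calculus step carefully (e.g. by expanding both sides as power series and checking termwise dominance in each regime).
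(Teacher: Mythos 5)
Your proof is correct: the exponential-moment method with the optimizing choices $t=\log(1+a)$ and $t=-\log(1-a)$, followed by the elementary inequalities $(1+a)\log(1+a)-a\geq a^2/3$ on $(0,3/2)$ and $(1-a)\log(1-a)+a\geq a^2/2$ on $(0,1)$ (the latter immediate from the power series $\sum_{k\geq 2}a^k/(k(k-1))$), is exactly the standard textbook argument, and the paper itself gives no proof, simply quoting the lemma from \cite{AlonSpencer} and \cite{JLR}. The only quibble is cosmetic: the constant $1/3$ actually survives slightly beyond $a=3/2$, so the stated range is convenient rather than ``precisely the sharp range,'' but this does not affect the validity of your argument.
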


We also make use of the following approximation for the lower tail
of a binomially distributed random variable.

\begin{lemma} \label{estimatingBin}Let $\frac{\log n}n\leq p\leq \frac{2\log n}{n}$, and let $0<\delta<1$ be such that $\left(\frac{e^2}{\delta}\right)^\delta e^{-1+\delta}\leq e^{-0.7}$. Then
$$\Pr[\emph{Bin}(n,p)\leq \delta np]\leq n^{-2/3}.$$
\end{lemma}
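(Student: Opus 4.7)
The plan is to apply the standard union-bound estimate for the lower tail of a binomial distribution. First, I would expand
$$\Pr[\text{Bin}(n,p)\leq \delta np]\leq \sum_{k=0}^{\lfloor\delta np\rfloor}\binom{n}{k}p^{k}(1-p)^{n-k}.$$
Since $\delta<1$, every $k$ in this range lies below the mode of the binomial, so the summands are nondecreasing in $k$ on this range; hence the whole sum is at most $(\delta np+1)$ times its largest term, the one at $k=\lfloor\delta np\rfloor$.

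For that extremal term I would use the textbook estimates $\binom{n}{k}\leq (en/k)^k$ and $(1-p)^{n-k}\leq e^{-p(n-k)}$, which yield
$$\binom{n}{\lfloor\delta np\rfloor}p^{\lfloor\delta np\rfloor}(1-p)^{n-\lfloor\delta np\rfloor}\leq \left(\frac{e}{\delta}\right)^{\delta np}e^{-np(1-\delta p)}=\left[\left(\frac{e}{\delta}\right)^{\delta}e^{-1}\right]^{np}\cdot e^{\delta np^{2}}.$$
Because $p\leq 2\log n/n$, one immediately checks that $\delta np^{2}=(\delta p)(np)=o(1)$, so the stray factor $e^{\delta np^{2}}$ contributes only a $1+o(1)$ multiplicative loss.

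It then remains to match the hypothesis to the expression $(e/\delta)^{\delta}e^{-1}$ that appears naturally. A short algebraic rearrangement gives $\left(\frac{e^{2}}{\delta}\right)^{\delta}e^{-1+\delta}=\left(\frac{e}{\delta}\right)^{\delta}e^{-1+2\delta}$, so the assumption forces $(e/\delta)^{\delta}e^{-1}\leq e^{-0.7-2\delta}\leq e^{-0.7}$. Combining this with $np\geq \log n$ yields $\left[(e/\delta)^{\delta}e^{-1}\right]^{np}\leq e^{-0.7\log n}=n^{-0.7}$, and after absorbing the polynomial prefactor $\delta np+1=O(\log n)=n^{o(1)}$ the final bound is $n^{-0.7+o(1)}\leq n^{-2/3}$ for all sufficiently large $n$. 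The only mildly subtle point is the algebraic translation between the hypothesis form $(e^{2}/\delta)^{\delta}e^{-1+\delta}$ and the form $(e/\delta)^{\delta}e^{-1}$ produced by the Chernoff-style calculation; the excess factor of $e^{2\delta}$ baked into the hypothesis is precisely the slack needed to swallow both the $e^{\delta np^{2}}$ correction and the $O(\log n)$ prefactor.
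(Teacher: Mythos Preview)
Your argument is correct and follows essentially the same route as the paper's own proof: bound the lower tail by (number of terms)$\times$(largest term), estimate the largest term via $\binom{n}{k}\le (en/k)^k$ and $(1-p)^{m}\le e^{-pm}$, and then use $np\ge \log n$ to get $n^{-0.7+o(1)}\le n^{-2/3}$. The only cosmetic difference is that the paper bounds $(1-p)^{n-i}\le e^{-(1-\delta)np}$ (using $n-i\ge (1-\delta)n$), which produces the expression $\bigl(\tfrac{e}{\delta}\bigr)^{\delta}e^{-1+\delta}$ directly, whereas you keep the sharper $e^{-np(1-\delta p)}$ and then separately absorb the harmless factor $e^{\delta np^{2}}=1+o(1)$; your algebraic translation between the hypothesis form and the Chernoff form is correct.
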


\begin{proof}
  Note that
  \begin{align*}\Pr[\Bin(n,p)\leq \delta np]&=\sum_{i=0}^{\delta np}\binom{n}{i}p^i(1-p)^{n-i}
  \leq \delta np \left(\frac{e}{\delta}\right)^{\delta np}e^{-(1-\delta)np}\\
  &\leq \delta np\left[\left(\frac{e}{\delta}\right)^{\delta}e^{-1+\delta}\right]^{np}
  \leq \delta np e^{-0.7np}\\
  &\leq e^{-(2/3)\log n}=n^{-2/3}.
    \end{align*}
\end{proof}

Before introducing the next tool to be used, we need the following
definition.

\begin{definition}
  Let $(A_i)_{i=1}^n$ be a collection of events in some probability
space. A graph $D$ on a vertex set $[n]$ is called a
\emph{dependency graph} for $(A_i)_i$ if $A_i$ is mutually
independent of all the events $\{A_j: ij\notin E(D)\}$.
\end{definition}

%
%
%
%

We make use of the following asymmetric version of the Lov\'asz
Local Lemma (see, e.g. \cite{AlonSpencer}).

\begin{lemma} (Asymmetric Local Lemma)\label{ALLL}
Let $(A_i)_{i=1}^n$ be a sequence of events in some probability
space. Suppose that $D$ is a dependency graph for $(A_i)_i$, and suppose that there are real numbers $(x_i)_{i=1}^n$
such that for every $i$ the following holds:
$$\Pr[A_i]\leq x_i\prod_{ij\in E(D)}(1-x_j).$$
Then, $\Pr[\bigcap_{i=1}^n \bar{A}_i]>0.$
\end{lemma}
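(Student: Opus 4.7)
The plan is to establish, by induction on $|S|$, the strengthened statement that for every $S\subseteq [n]$ and every index $i\notin S$,
$$\Pr\left[A_i \,\Big|\, \bigcap_{j\in S} \bar{A}_j\right] \leq x_i.$$
Once this is in hand, the chain rule yields
$$\Pr\left[\bigcap_{i=1}^n \bar{A}_i\right] = \prod_{i=1}^n \Pr\left[\bar{A}_i \,\Big|\, \bigcap_{j<i}\bar{A}_j\right] \geq \prod_{i=1}^n (1-x_i),$$
which is strictly positive; one may assume $x_i\in[0,1)$ since otherwise the hypothesis is either vacuous or forces $\Pr[A_i]=0$.

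For the base case $S=\emptyset$, the bound $\Pr[A_i]\leq x_i\prod_{ij\in E(D)}(1-x_j)\leq x_i$ is immediate. For the inductive step, I would partition $S$ into $S_1 := S\cap N_D(i)$ and $S_2 := S\setminus S_1$, and split the conditional probability as
$$\Pr\left[A_i \,\Big|\, \bigcap_{j\in S}\bar{A}_j\right] = \frac{\Pr\left[A_i\cap \bigcap_{j\in S_1}\bar{A}_j \,\Big|\, \bigcap_{j\in S_2}\bar{A}_j\right]}{\Pr\left[\bigcap_{j\in S_1}\bar{A}_j \,\Big|\, \bigcap_{j\in S_2}\bar{A}_j\right]}.$$

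I would bound the numerator from above by exploiting mutual independence of $A_i$ from $\{A_j:j\in S_2\}$ (since $ij\notin E(D)$ for every $j\in S_2$): dropping the $\bar{A}_{S_1}$ intersection and then removing the irrelevant $S_2$-conditioning gives $\Pr[A_i]\leq x_i\prod_{ij\in E(D)}(1-x_j)$. For the denominator, enumerate $S_1=\{j_1,\dots,j_r\}$ and apply the chain rule to write
$$\Pr\left[\bigcap_{t=1}^r \bar{A}_{j_t} \,\Big|\, \bigcap_{j\in S_2}\bar{A}_j\right] = \prod_{t=1}^r \Pr\left[\bar{A}_{j_t} \,\Big|\, \bigcap_{s<t}\bar{A}_{j_s}\cap\bigcap_{j\in S_2}\bar{A}_j\right] \geq \prod_{j\in S_1}(1-x_j),$$
where each factor is estimated via the inductive hypothesis, legitimately so because each conditioning index set has size strictly less than $|S|$. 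Using $S_1\subseteq N_D(i)$ together with $(1-x_j)\leq 1$, we have $\prod_{j\in S_1}(1-x_j)\geq \prod_{ij\in E(D)}(1-x_j)$, and dividing yields exactly $x_i$.

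The main obstacle I expect is the careful bookkeeping in the inductive step: selecting the right partition of $S$ so that the numerator can exploit mutual independence while the denominator admits a chain-rule expansion whose factors are all governed by the inductive hypothesis with strictly smaller conditioning sets. The formulation in terms of neighborhoods in $D$ rather than full independence is what makes this split both necessary and subtle, but once it is in place the computation collapses cleanly and the final chain-rule step to $\Pr[\bigcap \bar{A}_i]>0$ is a one-liner.
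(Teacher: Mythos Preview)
The paper does not prove this lemma; it is quoted as a standard tool with a reference to \cite{AlonSpencer}. Your proposal reproduces the classical inductive proof of the asymmetric Lov\'asz Local Lemma, and it is correct: the induction on $|S|$ for the conditional bound $\Pr[A_i\mid \bigcap_{j\in S}\bar A_j]\le x_i$, the split $S=S_1\cup S_2$ with $S_1=S\cap N_D(i)$, the use of mutual independence to bound the numerator by $\Pr[A_i]$, and the chain-rule lower bound on the denominator via the inductive hypothesis are exactly the standard argument. One minor remark: the lemma as stated in the paper omits the customary requirement $x_i\in[0,1)$; your parenthetical handling of this is adequate, though in a self-contained write-up it would be cleaner to add that hypothesis explicitly before starting the induction.
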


\subsection{Properties of $\mathcal G_c(n,p)$}

Here we gather fairly standard typical properties of sparse binomial
random graphs. Given a graph $G=(V,E)$ with vertex set  $V=[n]$
vertices, define the set of vertices  $SMALL\subseteq V$ by
$$
SMALL:=\{v\in [n]: d_G(v)\leq \delta \log n\}\,,
$$
where $\delta>0$ is a small enough absolute constant.

\begin{lemma}\label{lemma:Properties} Let $0<\beta,\varepsilon<1$ be absolute constants, let $c=(1+\varepsilon)n$, and let $\frac{\log n}{n}\leq p\leq \frac{2\log n}{n}$. Then, w.h.p.\ a graph $G\sim \mathcal G_c(n,p)$
  satisfies the following properties.
  \begin{enumerate}[$(P1)$]
   \item $\Delta(G)\leq 10\log n$.
    \item $|SMALL|\le n^{0.4}$.
    \item For every $v\in [n]$, $d^c_G(v)\ge d_G(v)-2$.
    \item Let  $E_0=\{e\in E(G): e\cap SMALL\neq \emptyset\}$. Then all the
    elements of $E_0$ are of distinct colors.
    \item No two vertices $x,y\in SMALL$ ($x$ and $y$ might be the same) have a path of length at most $4$ with $x,y$ as its endpoints in $G$.
    \item For every two disjoint subsets $X$ and $Y$ of size
    $|X|=|Y|=\omega\left(\frac{n}{(\log n)^{1/2}}\right)$, the
    number of colors appearing on the edges between $X$ and $Y$ is at least $(1+\varepsilon-o(1))n$.
    \item For every subset $\mathcal C\subseteq [c]$ of size $\beta n$ and for every subset $X\subseteq [n]$ for which $|X|^2p=\omega(n)$ we have
    that $\frac{\beta}{3} |X|^2p\leq e(G[X;\mathcal C])\leq \beta |X|^2p$.
    \item For every subset $\mathcal C\subseteq [c]$ of size $\beta n$ and for every two disjoint subsets $X,Y\subseteq [n]$ such that $|X||Y|p=\omega(n)$, we have
    that $\frac{\beta}{2} |X||Y|p\leq e(G[X,Y;\mathcal C])\leq \beta|X||Y|p$.
    \item For every $s\in[c]$, the number of edges in $G$ which are colored in
    $s$ is at most $10\log n$.
    \item For every subset $X\subseteq [n]$, if $|X|\leq
\frac{n}{\log^{4 /3}n}$, then $e_G(X)\leq 8|X|$.
\item For every $X\subseteq [n]$ of size $
|X|\geq \frac{n}{\log^{4/3}n}$, we have $e_G(X)\leq
|X|^2p\left(\frac{n}{|X|}\right)^{1/2}$.

  \end{enumerate}
\end{lemma}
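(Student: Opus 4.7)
Each of (P1)--(P11) is a standard concentration statement for $\mathcal G_c(n,p)$; in every case I would bound the expected number of witnesses to failure and then apply Chernoff (Lemma~\ref{Chernoff}) or Markov together with a union bound over the relevant vertices, colors, or subsets. Throughout I use $p = \Theta(\log n/n)$ (so mean degrees are $\Theta(\log n)$) and $c = (1+\varepsilon)n$.

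The purely structural bounds (P1), (P9), (P10), (P11) and the upper bounds in (P7), (P8) are routine Chernoff upper tails on binomial counts with means $\Theta(\log n)$, $\Theta(\log n)$, $O(|X|^2 p)$, $\Theta(\beta|X|^2 p)$, and $\Theta(\beta|X||Y|p)$ respectively, each easily small enough after a union bound over the ground set (for (P10) one uses the cleaner estimate $\Pr[e_G(X)\ge 8|X|]\le \binom{\binom{|X|}{2}}{8|X|}p^{8|X|}\le (e|X|p/16)^{8|X|}$ before summing $\binom{n}{|X|}$ against it). The matching lower bounds in (P7) and (P8) follow from the Chernoff lower tail, since $|X|^2 p, |X||Y|p = \omega(n)$ makes the relevant means $\omega(1)$. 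For (P2), Lemma~\ref{estimatingBin} supplies $\Pr[d_G(v)\le \delta\log n]\le n^{-2/3}$ for suitably small $\delta$, hence $\mathbb{E}|SMALL|\le n^{1/3}$ and Markov yields $|SMALL|\le n^{0.4}$ w.h.p.

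The three color-sensitive properties involving $SMALL$ fall out next. For (P3), condition on (P1); then the probability that a given vertex $v$ has three or more color coincidences among its $\le 10\log n$ incident edges is $O(\log^6 n/c^2)$, which survives the union bound over $v$. For (P4), condition on (P1)$\wedge$(P2) so that $|E_0|\le 10 n^{0.4}\log n$; each pair of edges in $E_0$ receives the same color with probability $1/c$, hence the expected number of monochromatic pairs in $E_0$ is $|E_0|^2/c = o(1)$, and Markov finishes. For (P5), a first-moment computation over labeled paths of length $\ell\le 4$ with both endpoints forced to have degree at most $\delta\log n$ gives expected count at most $\sum_{\ell\le 4} n^{\ell+1} p^\ell \cdot n^{-4/3} = o(1)$, using Lemma~\ref{estimatingBin} independently at the two endpoints (whose degree events are determined by disjoint edge neighborhoods after one exposes the path).

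The only genuinely delicate step is (P6), because a naive union bound over ordered pairs of disjoint subsets carries a factor of at least $4^n$. My plan is: first, Chernoff plus a union bound yields $e_G(X,Y) = (1+o(1))|X||Y|p = \omega(n) = \omega(c)$ simultaneously for all admissible $X, Y$; conditional on the edges, the expected number of colors \emph{missing} from the edge set between $X$ and $Y$ is $c(1-1/c)^{e_G(X,Y)}\le c\,e^{-\omega(1)} = o(n)$, and concentration of this missing-color count around its expectation (for instance via McDiarmid's bounded differences applied coordinatewise to the random edge coloring, each coordinate having influence $1$) gives a deviation probability of $e^{-\Omega(n)}$, comfortably beating the union bound. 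Balancing this union bound against the concentration rate in (P6) is the main obstacle; everything else is textbook Chernoff.
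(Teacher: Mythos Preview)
Your treatment of (P1)--(P5) and (P7)--(P11) is essentially the paper's: Chernoff/Markov plus union bound, with Lemma~\ref{estimatingBin} feeding (P2) and then (P4), (P5). No issues there.

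The gap is in (P6). Your claimed McDiarmid tail of $e^{-\Omega(n)}$ is not what bounded differences gives. With $m=e_G(X,Y)$ coordinates each of influence $1$, McDiarmid yields $\exp\bigl(-2t^2/m\bigr)$. You need $m=\omega(n)$ just to make the expected number of missing colors $o(n)$, and once $m=\omega(n)$ the bound for a deviation $t=\delta n$ is $\exp(-2\delta^2 n^2/m)=e^{-o(n)}$, not $e^{-\Omega(n)}$. In particular, for $|X|=|Y|=\Theta(n)$ one has $m=\Theta(n\log n)$ and the tail is only $e^{-\Theta(n/\log n)}$, hopelessly weak against the $e^{\Theta(n)}$ union bound over such pairs. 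Even if you first invoke monotonicity of the color count in $X,Y$ (which you do not mention) and restrict attention to a single minimal size $h(n)=\omega(n/\sqrt{\log n})$, you are left balancing $\exp(-\Theta(n^2/m))$ against $\binom{n}{h}^2$; this \emph{can} be made to work for $h$ growing just barely faster than $n/\sqrt{\log n}$, but it is delicate and certainly not ``comfortable''.

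The paper sidesteps the whole conditioning-on-edges manoeuvre. It fixes a set $\mathcal C'\subseteq [c]$ of $\gamma n$ colors and bounds directly, over the joint randomness of edges \emph{and} colors, the probability that no edge of $E(X,Y)$ carries a color from $\mathcal C'$:
\[
\Pr\bigl[\mathcal C'\cap c(E(X,Y))=\emptyset\bigr]
\;=\;\Bigl(1-p+p\,\tfrac{c-|\mathcal C'|}{c}\Bigr)^{|X||Y|}
\;\le\; e^{-p\gamma n\,|X||Y|/c}.
\]
The exponent is governed by $|X||Y|$ (all \emph{potential} pairs), not by the random $e_G(X,Y)$, so there is no loss from a preliminary edge-count step. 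For $|X|=|Y|=10n/\sqrt{\gamma\log n}$ this is $e^{-100n/(1+\varepsilon)}$, which kills the union bound $\binom{c}{\gamma n}\binom{n}{|X|}^2\le 8^n$ outright; monotonicity then covers all larger sizes and gives at least $(1+\varepsilon-\gamma)n$ colors for every $\gamma>0$. Replacing your two-stage argument for (P6) by this one-line product bound is the fix.
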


\begin{proof}

For $(P1)$, just note that given a vertex $v\in [n]$, since $d_G(v)\sim \Bin(n-1,p)$, it follows that

\begin{align*}
\Pr[d_G(v)\geq 10\log n]&\leq \binom{n}{10\log n}p^{10\log n}\\
&\leq \left(\frac{enp}{10\log n}\right)^{10\log n}=o(1/n).
\end{align*}

Therefore, by applying the union bound we obtain that w.h.p.\ $\Delta(G)\leq
10\log n$.

For $(P2)$ note that by Lemma \ref{estimatingBin}, the expected number of such vertices is at most $n^{1/3}$.
Therefore, by applying Markov's inequality, $(P2)$ immediately follows.

For $(P3)$,
aasume that $d_G^c(v)\leq d_G(v)-3$. In particular, this means that
there are 2 disjoint pairs $\{x_1,y_1\}$ and $\{x_2,y_2\}$ of
neighbors of $v$ such that for $i=1,2$ both $vx_i$ and $vy_i$ have
the same color. The probability of this to happen is upper bounded
by $\left(d_G(v)\right)^{4}c^{-2}=o(n^{-1})$. Therefore, by applying
the union bound we obtain $(P3)$.

For $(P4)$, note that by $(P2)$ we have that $|SMALL|=o(n^{0.49})$.
Now, since $\Delta(D)\leq 10\log n$, it follows that
$|E_0|=o(n^{1/2})$. Therefore,
$$\Pr\left[\exists e,e'\in E_0 \textrm{ with the same color}\right]\leq \binom{|E_0|}{2}c^{-1}=o(1).$$

For $(P5)$, note that, given two vertices $x,y$, the probability
that there exists a path of length at most $4$ between them is upper
bounded by $p+np^2+n^2p^3+n^3p^4\leq \frac{17\log^4n}n$. Now, since
by $(P2)$ we have $|SMALL|=o(n^{-0.49})$, and since the events
``$v\in SMALL$" are ``almost independent", by applying the union
bound we can easily obtain that the probability for having two such vertices
$x,y$ in $SMALL$ is $o(1)$.

For $(P6)$, let $X$ and $Y$ be two disjoint subsets. Note that for a
set $\mathcal C'\subseteq \mathcal C$ of size $t$, the probability
that none of the colors of $\mathcal C'$ appears on $E(X,Y)$ is upper
bounded by:
$$(1-p+p(1-t/c))^{|X||Y|}\leq e^{-pt|X||Y|/c}.$$
Therefore, if $t=\gamma n$ for some fixed constant $\gamma>0$, then
by applying the union bound we obtain that the probability for
having a subset $\mathcal C'$ of colors of size $\gamma n$, and two
disjoint subsets $X$ and $Y$ of sizes $|X|=|Y|=x=
\frac{10n}{(\gamma\log n)^{1/2}}$ for which none of the edges in
$E(X,Y)$ uses colors of $\mathcal C'$ is at most
$$
\binom{n}{x}\binom{n}{x}\binom{(1+\varepsilon) n}{\gamma
n}e^{-\gamma npx^2/c}\leq 8^ne^{-\frac{100\gamma pn^2}{c\gamma\log
n}} =o(1)\ .
$$

For $(P7)$ just note that the expectation of the number of such
edges is $\binom{|X|}{2}\cdot p\cdot \frac{\beta n}{c}=\omega(n)$, and therefore, by Chernoff's inequality and
the union bound over all choices of $X$ and of $\mathcal C$, we easily obtain the desired claim.

For $(P8)$, let $X,Y\subseteq [n]$ be such subsets. Note that since
$\mathcal C$ is of size $\beta n$, the expected number of edges
between $X$ and $Y$ which are assigned colors from $\mathcal C$
is $\frac{\beta}{1+\varepsilon}|X||Y|p=\omega(n)$. Therefore, the
property follows easily from Chernoff's inequality and the union
bound.

For $(P9)$, $s\in[c]$ be some color and let $Y$ denote the random
variable which counts the number of edges colored $s$ in $G$.
Clearly, $Y\sim \Bin\left(e(G),\frac{1}{(1+\varepsilon)n}\right)$.
Now, it is easy to show that w.h.p.\ $e(G)=
(1+o(1))\binom{n}{2}p\leq (1+o(1))n\log n\leq 2n\log n$, and
therefore,
\begin{align*}
\Pr\left[Y\geq 10\log n\right]&\le \binom{2n\log n}{10\log
n}\left(\frac{1}{(1+\varepsilon)n}\right)^{10\log n}\\
&\leq \left(\frac{2e n\log n}{10(1+\varepsilon)n\log
n}\right)^{10\log n}=o(1/n).
\end{align*}
Next, by applying the union bound we obtain the desired claim.

We leave $(P10)$ and $(P11)$ as an exercise for the reader.
\end{proof}

\subsection{Finding rainbow star matchings between appropriate sets}

In this subsection we describe the main technical lemma which will
be used in the proof of Theorem \ref{main}. Informally speaking, this
lemma ensures the existence of rainbow star matchings between sets
of appropriate sizes.

\begin{lemma}
  \label{lemma:ugly}
  Let $\alpha, \varepsilon>0$ be constants, let $D$ be a fixed integer, let $c=(1+\varepsilon)n$ and
  let $\frac{\log n}{n}\leq p\leq \frac{2\log n}n$. Then, a graph
  $G\sim \mathcal G_c(n,p)$ is w.h.p.\ such that the following
  holds. Suppose that
  \begin{enumerate}[$(i)$]
    \item $W\subseteq [n]$ of size $(1+o(1))\frac{n}{\log\log n}$, and
    \item $S\subseteq [n]$ of size $\frac{n}{\log^{0.4}n}\leq |S|\leq \frac{2n}{\log^{0.4}n}$, and
    \item $\mathcal C_1\subseteq \mathcal C:=[c]$ of size $|\mathcal
    C_1|=\alpha n$, and
    \item for every $s\in S$ there are at least $\frac{\log n}{(\log\log n)^2}$
    edges $e=sw$ with $w\in W$ and $c(e)\in \mathcal C_1$.
  \end{enumerate}
  Then, there exists a rainbow $D$-matching from $S$ to $W$, with all colors from $\mathcal C_1$.
\end{lemma}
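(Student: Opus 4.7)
The plan is to apply the asymmetric Lov\'asz Local Lemma (Lemma~\ref{ALLL}) to the following random construction: for each $s \in S$, independently sample $X_s$, a uniformly random $D$-subset of $E_s$, and set $M := \bigcup_{s \in S} X_s$. By construction $d_M(s, W) = D$ for every $s \in S$, and every edge of $M$ carries a color from $\mathcal{C}_1$; it remains only to show that, with positive probability, no two edges of $M$ coming from distinct $s$-vertices share either a color or a $W$-endpoint.

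The bad events will be $A_{e, e'} := \{e \in X_{s(e)},\, e' \in X_{s(e')}\}$, one for each unordered pair of edges in $B := G[S, W; \mathcal{C}_1]$ with $s(e) \neq s(e')$ and with either $c(e) = c(e')$ or $e \cap W = e' \cap W$. Setting $L := \log n / (\log\log n)^2$, the hypothesis $|E_s| \geq L$ gives $\Pr[A_{e, e'}] \leq D^2/(|E_{s(e)}||E_{s(e')}|) \leq D^2/L^2$. I will first use property $(P3)$ to replace each $E_s$ by a rainbow subset $E'_s \subseteq E_s$ with $|E'_s| \geq L - 2$, so that internal color conflicts within a single $X_s$ are automatically excluded. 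Properties $(P1)$ and $(P9)$ then imply respectively that $|E'_s| \leq \Delta(G) \leq 10 \log n$ and that each color class of $G$ has size at most $10 \log n$; together these force each edge of $B$ to clash with only $O(\log n)$ other edges of $B$, which bounds the dependency-graph degree of the events $A_{e, e'}$ by $O(\log^2 n)$.

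The main obstacle is a tight numerical balance: the product of the event probability $\Pr[A_{e, e'}]$ and the dependency-graph degree is $\Theta(D^2 (\log \log n)^4)$, which narrowly exceeds the threshold for the symmetric LLL. To push the argument through I will apply the asymmetric version with weights $x_{e, e'}$ proportional to $D^2/(|E_{s(e)}||E_{s(e')}|)$, verifying the inequality $\Pr[A_{e, e'}] \leq x_{e, e'} \prod_{A' \sim A_{e, e'}} (1 - x_{A'})$ by decomposing the sum over dependent events into a color-sharing contribution (controlled via $(P9)$, handled one color class at a time) and a $W$-endpoint-sharing contribution (controlled via $(P1)$, handled one $W$-vertex at a time). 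The refined accounting exploits the full per-edge bound $D^2/(|E_{s(e)}||E_{s(e')}|)$ rather than the uniform upper bound $D^2/L^2$, so the sums can in fact be kept bounded. Once the LLL hypothesis is verified, with positive probability no bad event occurs, producing the required rainbow $D$-matching from $S$ to $W$ using colors from $\mathcal{C}_1$.
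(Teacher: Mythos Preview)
Your LLL approach has a genuine gap at exactly the point you flag as ``the main obstacle'': the asymmetric Local Lemma cannot close the $(\log\log n)^{\Theta(1)}$ shortfall, even with the refined weights you propose. Take $x_{e,e'}=c_0 D^2/(|E'_{s(e)}|\,|E'_{s(e')}|)$. The LLL condition for $A_{e,e'}$ reduces to showing that $\sum_{(f,f')\sim(e,e')} x_{f,f'}$ is bounded by an absolute constant. But the dependent events are precisely those $A_{f,f'}$ with $\{s(f),s(f')\}\cap\{s(e),s(e')\}\neq\emptyset$, so for each $s_0\in\{s(e),s(e')\}$ you must sum over $f\in E'_{s_0}$ and over all $f'$ clashing with $f$. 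For a fixed $f=s_0w$ the $W$-endpoint clashes alone contribute $\sum_{s':\,s'w\in B} c_0D^2/(|E'_{s_0}|\,|E'_{s'}|)$; the only lower bound available on $|E'_{s'}|$ is $L-2$, and $d_B(w)$ can be as large as $10\log n$ by $(P1)$, so this inner sum can be of order $(\log n)/L=(\log\log n)^2$. Summing over $f\in E'_{s_0}$ and dividing by $|E'_{s_0}|$ does not help, because the factor $1/|E'_{s_0}|$ is exactly cancelled by the $|E'_{s_0}|$ choices of $f$. Hence $\sum_{(f,f')\sim(e,e')} x_{f,f'}=\Omega((\log\log n)^2)$, and no constant $c_0$ (nor any other assignment of weights bounded below by the event probabilities) can satisfy the LLL hypothesis. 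The ``refined accounting'' you invoke would only help if $|E'_{s'}|$ were typically much larger than $L$, but the lemma's hypothesis $(iv)$ gives nothing beyond $|E'_s|\ge L$; in the extremal instance where every $|E'_s|$ equals $L$ your per-edge bound coincides with the uniform bound $D^2/L^2$ and the computation collapses to the symmetric case you already recognised as failing.

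The underlying reason is structural: the expected number of bad events that occur is polynomially large in $n$ (a single $w\in W$ with $d_B(w)=\Theta(\log n)$ is hit by $\Theta(D(\log\log n)^2)$ of the random stars in expectation), so a purely local argument cannot succeed. The paper instead uses the Aharoni--Haxell hypergraph Hall theorem: for each $s\in S$ it builds an auxiliary graph $\mathcal H_s$ on vertex set $W\cup\mathcal C_1$ whose edges encode simultaneously the $W$-endpoint and the colour of each $sw\in B$, and then verifies the deficiency condition that $\bigcup_{t\in T}\mathcal H_t$ contains a matching of size $>2D(|T|-1)$ for every $T\subseteq S$. This is a global Hall-type argument, handled in two regimes of $|T|$ via Lemmas~\ref{lemma:auxForUgly1}, \ref{lemma:auxForUgly2} and~\ref{aux3}, and it is what is actually needed here.
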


The main ingredient in the proof of Lemma \ref{lemma:ugly} is the
following powerful tool due to Aharoni and Haxell \cite{AH},
generalizing Hall's theorem to hypergraphs.

\begin{theorem} \label{thm:AH}
Let $g$ and $D$ be positive integers and let $\mathcal H=\{\mathcal
H_1,\ldots, \mathcal H_t\}$ be a family of $g$-uniform hypergraphs
on the same vertex set. If, for every $I\subseteq [t]$, the
hypergraph $\bigcup_{i\in I}\mathcal H$ contains a matching of size
greater than $Dg(|I|-1)$, then there exists a function $f:[t]\times
[D]\rightarrow \bigcup_{i=1}^tE(\mathcal H_i)$ such that $f(i,j)\in
E(\mathcal H_i)$ for every $i$ and $j$, and $f(i,j)\cap
f(i',j')=\emptyset$ for $(i,j)\neq (i',j')$.
\end{theorem}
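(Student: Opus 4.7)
The plan is to apply the Aharoni--Haxell theorem (Theorem~\ref{thm:AH}) with $g=2$ and the given integer $D$. For each $s\in S$ define a $2$-uniform hypergraph $\mathcal{H}_s$ on the vertex set $W\cup\mathcal{C}_1$, whose hyperedges are the pairs $\{w,c(sw)\}$ indexed by edges $sw\in E(G)$ with $w\in W$ and $c(sw)\in\mathcal{C}_1$. A system of disjoint representatives $f\colon S\times[D]\to\bigcup_s E(\mathcal{H}_s)$ with $f(s,j)\in E(\mathcal{H}_s)$ is then precisely a rainbow $D$-matching from $S$ to $W$ using colors in $\mathcal{C}_1$: distinctness of the $w$-coordinates enforces both that each $s$ meets $D$ distinct vertices of $W$ and that no $w\in W$ is used twice, while distinctness of the color-coordinates enforces the rainbow property.

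Thus the main task is to verify, for every $I\subseteq S$, the Aharoni--Haxell condition $\nu(H_I)>2D(|I|-1)$, where $H_I:=\bigcup_{s\in I}\mathcal{H}_s$ is regarded as a bipartite graph on $W\cup\mathcal{C}_1$. I would proceed by case analysis on $|I|$. In the small regime (roughly $|I|\le\log n/(10D(\log\log n)^2)$), fix a maximum matching $\mathcal{M}$ of $H_I$; by König's theorem $V(\mathcal{M})$ is a cover of $H_I$, so for any single $s\in I$, every edge $sw\in G[I,W;\mathcal{C}_1]$ has either $w\in V(\mathcal{M})\cap W$ or $c(sw)\in V(\mathcal{M})\cap\mathcal{C}_1$. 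The first case contributes at most $\nu(H_I)$ edges at $s$, while by property~$(P3)$ the second contributes at most $3\nu(H_I)$ (since each color occurs at most three times at any fixed vertex). Combining with hypothesis~(iv) forces $\nu(H_I)\ge\log n/(4(\log\log n)^2)$, which exceeds $2D(|I|-1)$ in this regime.

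In the large regime, $|I|=\omega(n\log\log n/\log n)$, I assume for contradiction that $\nu(H_I)\le 2D(|I|-1)$ and pick a maximal matching $\mathcal{M}$. Set $W':=W\setminus V(\mathcal{M})$ and $\mathcal{C}_1':=\mathcal{C}_1\setminus V(\mathcal{M})$; since $|V(\mathcal{M})|\le 4D|I|\le 8Dn/\log^{0.4}n$ is negligible against $|W|\asymp n/\log\log n$ and $|\mathcal{C}_1|=\alpha n$, one has $|W'|\ge |W|/2$ and $|\mathcal{C}_1'|\ge\alpha n/2$. Maximality of $\mathcal{M}$ forces $G[I,W';\mathcal{C}_1']$ to be edgeless. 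But $|I||W'|p\ge |I|\log n/(2\log\log n)=\omega(n)$ in this regime, so property~$(P8)$ yields $e(G[I,W';\mathcal{C}_1'])=\Theta(|I||W'|p)>0$, a contradiction.

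The main obstacle lies in the medium regime, with $|I|$ between roughly $\log n/(\log\log n)^2$ and $n\log\log n/\log n$, where neither the single-$s$ counting nor the $(P8)$-maximality argument closes on its own. Here I expect to combine the summed covering estimate $e(G[I,W;\mathcal{C}_1])\le 4|I|\nu(H_I)$ with sharper control on the degrees of $H_I$: property~$(P9)$ already bounds $\Delta_{\mathcal{C}_1}(H_I)$ by $10\log n$, and a uniform Chernoff-plus-union-bound estimate on $|N_G(w)\cap I|$ should upgrade the König inequality $\nu(H_I)\ge e(G[I,W;\mathcal{C}_1])/\Delta(H_I)$ enough to push through the full range. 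Establishing the necessary uniform $\Delta_W(H_I)$-bound, and threading it into the vertex-cover estimate, is where I expect the bulk of the technical work to go.
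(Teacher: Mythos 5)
Your proposal does not prove the statement in question. Theorem~\ref{thm:AH} is the Aharoni--Haxell hypergraph generalization of Hall's theorem: given $g$-uniform hypergraphs $\mathcal H_1,\ldots,\mathcal H_t$ such that every union $\bigcup_{i\in I}\mathcal H_i$ has a matching of size greater than $Dg(|I|-1)$, one must produce the disjoint system of representatives $f$. In the paper this is an external result, quoted from the Aharoni--Haxell paper and used as a black box. What you have written instead \emph{assumes} this theorem (``The plan is to apply the Aharoni--Haxell theorem with $g=2$\ldots'') and uses it to deduce the existence of a rainbow $D$-matching from $S$ to $W$; that is, you are sketching a proof of Lemma~\ref{lemma:ugly}, not of Theorem~\ref{thm:AH}. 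With respect to the stated goal the argument is circular: the very first step invokes the statement you were asked to prove, and everything that follows (the verification of the matching condition for small, large and medium $|I|$ via properties $(P3)$, $(P8)$, $(P9)$, K\H{o}nig-type covers, etc.) concerns the hypotheses of the theorem, not its conclusion.

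A genuine proof of Theorem~\ref{thm:AH} requires entirely different machinery: Aharoni and Haxell's argument is topological, resting on lower bounds for the (homotopical) connectedness of the matching complex of a hypergraph and a Sperner/nerve-type argument, and no amount of random-graph estimates or Chernoff bounds is relevant to it. (As a secondary point, even read as an attempt at Lemma~\ref{lemma:ugly}, your argument is incomplete: you concede that the medium regime of $|I|$ is open, whereas the paper closes the analogous case split at $|T|=n/\log^{0.9}n$ using Lemmas~\ref{aux3}, \ref{lemma:auxForUgly1} and \ref{lemma:auxForUgly2}. But the primary defect is the mismatch of target: nothing in the proposal addresses why the stated sufficient condition forces the existence of the function $f$.)
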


When applying Theorem \ref{thm:AH}, we will
distinguish between few cases according to the size of $I\subseteq
[t]$. The following lemmas will make our life a bit easier with it.

\begin{lemma}
  \label{lemma:auxForUgly1}
Let $\varepsilon>0$, let $c=(1+\varepsilon)n$, let $D\in \mathbb{N}$
and let $\frac{\log n}n\leq p\leq \frac{2\log n}n$. Then a graph
$G\sim \mathcal G_c(n,p)$ is w.h.p.\ such that the following holds.
For every collection of $j\leq \frac{n}{\log^{0.9}n}$ vertex
disjoint stars, each  of size $\log^{0.2}n$, the number of colors
appearing on their edges is at least $2Dj$.
\end{lemma}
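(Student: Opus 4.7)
The plan is a direct union bound over all possible bad configurations of stars and color palettes. Fix an integer $j$ with $1 \le j \le n/\log^{0.9}n$, and set $L := \log^{0.2}n$. The event we want to rule out is that there exist $j$ vertex-disjoint stars $\mathcal{F}$, each of size $L$, together with a color set $\mathcal{C}_0 \subseteq [c]$ of size $2Dj$, such that every edge of $\mathcal{F}$ is present in $G$ and colored from $\mathcal{C}_0$. Since the $jL$ edges of $\mathcal{F}$ are pairwise distinct, and edge-presence is independent of edge-coloring, for fixed $\mathcal{F}$ and $\mathcal{C}_0$ this probability equals exactly $p^{jL}(2Dj/c)^{jL}$.

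Bounding the number of such $\mathcal{F}$ by $\binom{n}{j}\binom{n}{L}^j$ (first choose $j$ centers, then $L$ leaves per center, tolerating over-counting) and the number of such $\mathcal{C}_0$ by $\binom{c}{2Dj}$, and then summing over $j$, we obtain
\begin{equation*}
\Pr[\text{bad event}] \;\le\; \sum_{j=1}^{n/\log^{0.9}n} \binom{n}{j}\binom{n}{L}^j \binom{c}{2Dj} \, p^{jL}\,(2Dj/c)^{jL}.
\end{equation*}

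The remainder is computational. Applying $\binom{n}{k} \le (en/k)^k$, $c=(1+\varepsilon)n$, and $np \le 2\log n$, a direct manipulation shows that the logarithm of the summand $Q_j$ satisfies
\begin{equation*}
\log Q_j \;\le\; -(L-2D-1)\, j \log(n/j) \;+\; 0.8\,jL\log\log n \;+\; O(jL).
\end{equation*}
The hypothesis $j \le n/\log^{0.9}n$ gives $\log(n/j) \ge 0.9\log\log n$, and since $L = \log^{0.2}n$ dwarfs the constant $2D+1$ for large $n$, the first term contributes a decay of at least $0.89\,jL\log\log n$, beating the $0.8\,jL\log\log n$ growth by a margin $\Omega(jL\log\log n)$. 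Hence $Q_j \le \exp(-\Omega(jL\log\log n))$, and the resulting geometric-type series is dominated by its first term $\exp(-\Omega(L\log\log n)) = o(1)$.

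The main subtlety is precisely this balance of logarithmic factors: the factor $(enp/L)^{jL} = (O(\log^{0.8}n))^{jL}$ already contributes substantial growth $\exp(0.8\,jL\log\log n)$, so extracting net decay requires both the largeness of $L$ (to dominate the constant $2D+1$) and the strict gap between the exponents $0.9$ and $0.8$, the former being supplied precisely by the hypothesis $j \le n/\log^{0.9}n$.
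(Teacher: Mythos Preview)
Your proof is correct and follows essentially the same approach as the paper: the same union bound over choices of $j$ centers, $L$ leaves per center, and a color palette of size $2Dj$, together with the probability $p^{jL}(2Dj/c)^{jL}$ that the configuration occurs. Your write-up is in fact more detailed than the paper's in tracking the balance of the $\log\log n$ factors that makes the sum $o(1)$.
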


\begin{proof}
Let $s:=\log^{0.2}n$. We show that the probability of having a
collection of $j\leq \frac{n}{\log^{0.9}n}$ stars, each of which of
size $s$ whose union contains at most $2Dj$ colors is $o(1)$. The
following expression is an upper bound for this probability:
\begin{align*}
\sum_{j=1}^{\frac{n}{\log^{0.9}n}}&\binom{n}{j}\binom{n}{s}^jp^{js}\binom{(1+\varepsilon)n}{2Dj}\left(\frac{2Dj}{(1+\varepsilon)n}\right)^{js}\\
&\leq \sum_{j=1}^{\frac{n}{\log^{0.9}n}}
\left(\frac{en}j\right)^j\left(\frac{e(1+\varepsilon)n}{2Dj}\right)^{2Dj}\left(\frac{2eDjnp}{s(1+\varepsilon)n}\right)^{js}=o(1).
\end{align*}

Indeed, fix $j\leq \frac{n}{\log^{0.9}n}$ and first choose $j$
vertices to be the "centers" of the stars. For each of these
vertices choose $s$ neighbors  and multiply by the probability of
all these edge to appear. Next, choose a set of $2Dj$ colors from
$c=(1+\varepsilon)n$ and multiply by the probability that all the
edges of the stars are  colored with these colors. This completes
the proof of the lemma.
\end{proof}

The following lemma may look at the first glance a bit complicated to
understand, but its role will become clear during the proof of Lemma
\ref{lemma:ugly}.

\begin{lemma}
  \label{lemma:auxForUgly2}
  Let $0<\alpha<\varepsilon<1$, let $c=(1+\varepsilon)n$, let $D\in \mathbb{N}$
and let $\frac{\log n}n\leq p\leq \frac{2\log n}n$. Then a graph
$G\sim \mathcal G_c(n,p)$ is w.h.p.\ such that for every
\begin{enumerate}[$(i)$]
\item $\frac{n}{\log^{0.9}n}\leq j\leq \frac{2n}{\log^{0.4}n}$,
\item $W\subseteq [n]$ of size $|W|=(1+o(1))\frac{n}{\log\log n}$, and
\item $\mathcal C_1\subseteq \mathcal C:=[c]$ of size $|\mathcal C_1|=\alpha
 n$,
\end{enumerate}
the following holds. The probability of having subsets $X\subseteq
[n]$ of size $j$, $W'\subseteq W$ and $\mathcal C_2\subseteq
\mathcal C_1$ of sizes at most $2Dj$ such that for every edge $xw\in
E_G(X,W)$ we have $c(xw)\in \mathcal C_2$ or $w\in W'$ or
$c(xw)\notin \mathcal C_1$ is $o(1)$.

\end{lemma}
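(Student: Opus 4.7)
The plan is a direct first-moment argument. Fix valid $j$, $W$, $\mathcal{C}_1$ from (i)--(iii), and fix a candidate triple $(X, W', \mathcal{C}_2)$ with $|X|=j$ and $|W'|, |\mathcal{C}_2| \le 2Dj$. The color condition in the lemma is equivalent to: no edge $xw\in E(G)$ with $x\in X$ and $w\in W\setminus W'$ has $c(xw)\in \mathcal{C}_1\setminus \mathcal{C}_2$. Since $j\le 2n/\log^{0.4}n$ gives $2Dj=o(\alpha n)$ and $2Dj=o(|W|)$, we have $|\mathcal{C}_1\setminus \mathcal{C}_2|=(1-o(1))\alpha n$ and $|W\setminus W'|=(1-o(1))n/\log\log n$. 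There are at least $(1-o(1))jn/\log\log n$ pairs $(x,w)$ with $x\in X$, $w\in W\setminus W'$ and $x\neq w$, and the events that each such pair forms an edge whose color lies in $\mathcal{C}_1\setminus\mathcal{C}_2$ are mutually independent, each of probability $p|\mathcal{C}_1\setminus\mathcal{C}_2|/c\ge (1-o(1))\alpha p/(1+\varepsilon)$. Using $p\ge \log n/n$, a fixed triple is therefore ``bad'' with probability at most
\[
\exp\!\left(-(1-o(1))\frac{\alpha pjn}{(1+\varepsilon)\log\log n}\right)\;\le\;\exp\!\left(-\gamma\,\frac{j\log n}{\log\log n}\right)
\]
for some constant $\gamma=\gamma(\alpha,\varepsilon)>0$.

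Now I union-bound over all tuples $(j,W,\mathcal{C}_1,X,W',\mathcal{C}_2)$. The outer enumeration of $W$ and $\mathcal{C}_1$ contributes at most $\binom{n}{|W|}\binom{c}{\alpha n}\le 2^n\cdot 2^{(1+\varepsilon)n}=e^{O(n)}$. For the inner enumeration, within the range $n/\log^{0.9}n\le j\le 2n/\log^{0.4}n$ each of $n/j$, $|W|/(2Dj)$, $\alpha n/(2Dj)$ is at most $\mathrm{polylog}(n)$, so
\[
\binom{n}{j}\binom{|W|}{\le 2Dj}\binom{\alpha n}{\le 2Dj}\;\le\;\exp(Cj\log\log n)
\]
for some $C=C(D,\alpha)>0$. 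Combining, the total failure probability is bounded by
\[
\sum_j\exp\!\left(O(n)+Cj\log\log n-\gamma\,\frac{j\log n}{\log\log n}\right).
\]
For $j\ge n/\log^{0.9}n$ the negative term is at least $\gamma n\log^{0.1}n/\log\log n$, which dominates both the $O(n)$ cost and $Cj\log\log n=o(j\log n/\log\log n)$. Each summand is thus $\exp(-\Omega(n\log^{0.1}n/\log\log n))$, and summing over the $O(n)$ admissible values of $j$ preserves $o(1)$.

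The only delicate point is that the exponential decay $\exp(-\gamma j\log n/\log\log n)$ must beat the $\exp(O(n))$ cost of enumerating $W$ and $\mathcal{C}_1$; this requires precisely the hypothesis $j\ge n/\log^{0.9}n$, and is why the complementary small-$j$ regime is handled separately by Lemma \ref{lemma:auxForUgly1} via a star-counting argument. Everything else reduces to routine binomial-tail and binomial-coefficient estimation.
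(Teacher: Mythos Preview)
Your argument is correct and follows essentially the same route as the paper: compute, for a fixed tuple $(W,\mathcal{C}_1,j,X,W',\mathcal{C}_2)$, the probability that every pair $(x,w)\in X\times(W\setminus W')$ fails to be an edge colored from $\mathcal{C}_1\setminus\mathcal{C}_2$, then union-bound over all tuples and verify that the exponential decay $\exp(-\Theta(j\log n/\log\log n))$ beats the $e^{O(n)}\cdot e^{O(j\log\log n)}$ enumeration cost precisely when $j\ge n/\log^{0.9}n$. If anything, your write-up is slightly more careful than the paper's, which omits the $\binom{n}{j}$ factor for the choice of $X$ (harmless, since it is absorbed into the same $e^{O(j\log\log n)}$ term you identify).
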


\begin{proof}
In order to prove the lemma, note that we can upper bound the
probability by
\begin{align*}
\binom{n}{|W|}&\binom{|W|}{|W'|}\binom{c}{|\mathcal
C_1|}\binom{|\mathcal C_1|}{|\mathcal C_2|}\cdot
\sum_{j=\frac{n}{\log^{0.9}n}}^{\frac{n}{\log^{0.4}n}}\left(1-p+p\left(\frac{|\mathcal C_2|+|\mathcal C\setminus \mathcal C_1|}{|\mathcal C|}\right)\right)^{j|W\setminus W'|}\\
&\leq
16^n\sum_{j=\frac{n}{\log^{0.9}n}}^{\frac{n}{\log^{0.4}n}}\left(1-p+p\frac{2Dj}{(1+\varepsilon)n}+p(1-\alpha/(1+\varepsilon)+o(1))\right)^{j|W\setminus
W'|}\\
&\leq
16^n\sum_{j=\frac{n}{\log^{0.9}n}}^{\frac{n}{\log^{0.4}n}}\exp\left(-Cjp\left(\frac{n}{\log\log
n}-Dj\right)\right)=o(1).
\end{align*}

($C$ is some constant which depends on $\alpha$). This completes the
proof.
\end{proof}

The following lemma shows that in a typical random graph $G\sim
\mathcal G(n,p)$, any bipartite subgraph $B=(S\cup W,E')\subseteq G$
with all the vertices in $S$ of ``large" degree contains an
$s$-matching from $S$ to $W$, for an appropriate choice of
parameters.

\begin{lemma}
  \label{aux3}
Let $\frac{\log n}{n}\leq p\leq \frac{2\log n}n$. Then, a graph
$G\sim \mathcal G(n,p)$ is w.h.p.\ such that the following holds.
Suppose that $B=(S\cup W,E')$ is a bipartite (not necessarily
induced) subgraph of $G$, with
\begin{enumerate}[$(i)$]
\item $|S|\leq \frac{n}{\log^{0.9}n}$, and
\item $|W|=(1+o(1))\frac{n}{\log\log n}$, and
\item $d_B(v)\geq \frac{\log n}{(\log\log n)^2}$ for every $v\in S$.
\end{enumerate}
Then there exists a $\log^{0.2}n$-matching from $S$ to $W$.
\end{lemma}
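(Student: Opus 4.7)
The plan is to deduce the existence of the desired matching from a Hall-type expansion condition via Theorem~\ref{thm:AH} applied with $g=1$. For each $s\in S$ one takes $\mathcal{H}_s$ to be the $1$-uniform hypergraph on vertex set $W$ whose edges are the singletons $\{w\}$ with $w\in N_B(s)$; a matching in $\bigcup_{s\in S'}\mathcal{H}_s$ is just a set of distinct vertices in $N_B(S')$, so its maximum size equals $|N_B(S')|$. Setting $D:=\log^{0.2}n$, the lemma will follow once we verify the Hall-type condition
\[
|N_B(S')|>D(|S'|-1)\qquad\text{for every }S'\subseteq S.
\]

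Write $m:=\log n/(\log\log n)^2$. For $|S'|=k\leq m/D=\log^{0.8}n/(\log\log n)^2$ the condition is automatic from hypothesis $(iii)$: any single $s\in S'$ gives $|N_B(S')|\geq d_B(s)\geq m\geq Dk>D(k-1)$. For $m/D<k\leq n/\log^{0.9}n$ I will instead show that w.h.p.\ $G$ itself satisfies the \emph{deterministic} expansion property that there do not exist $S'\subseteq[n]$ and $T\subseteq[n]$ with $|S'|=k$, $|T|=Dk$, and $e_G(S',T)\geq km$. Granted this property, if some admissible $B$ were to violate the Hall-type condition via some $S'$ with $|N_B(S')|\leq D(k-1)$, extending $N_B(S')$ to a set $T\subseteq[n]$ of size $Dk$ would yield $e_G(S',T)\geq e_B(S',T)\geq km$, a contradiction.

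To establish the expansion property, fix $k$ in the relevant range and observe that $e_G(S',T)\sim\Bin(Dk^2,p)$, so a direct tail bound gives
\[
\Pr[e_G(S',T)\geq km]\leq\binom{Dk^2}{km}p^{km}\leq\left(\frac{eDkp}{m}\right)^{km}.
\]
The ratio $km/(Dk^2p)\geq\log^{0.7}n/(2(\log\log n)^2)\to\infty$ throughout the range, so the above is of the shape $(o(1))^{km}$. A union bound over choices of $S'$ and $T$ then gives
\[
\binom{n}{k}\binom{n}{Dk}\left(\frac{eDkp}{m}\right)^{km}\leq \exp\!\left(-\Omega\!\left(\frac{k\log n}{\log\log n}\right)\right),
\]
and summing over $m/D<k\leq n/\log^{0.9}n$ is easily $o(1)$.

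The main obstacle is the quantitative balance in this union bound: the entropy factors $\binom{n}{k}\binom{n}{Dk}$ contribute $\exp(O(k\log^{0.2}n\cdot\log\log n))$, which must be overwhelmed by the tail factor $\exp(-\Omega(km\log\log n))$. This works precisely because $m=\log n/(\log\log n)^2$ is much larger than $D\log\log n=\log^{0.2}n\log\log n$, which is the slack built into the hypothesis $d_B(v)\geq m$ relative to the target matching degree $D$.
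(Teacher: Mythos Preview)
Your argument is essentially the paper's: both verify a Hall-type expansion condition and derive a contradiction from its failure by counting edges of $G$; the paper sets $Y=X\cup N_B(X)$ and invokes the pre-established single-set bounds (P10) and (P11), while you run a self-contained bipartite union bound on $e_G(S',T)$, which works equally well. One caution: the condition $|N_B(S')|>D(|S'|-1)$ you extract from Theorem~\ref{thm:AH} with $g=1$ is not by itself sufficient for a $D$-matching (already $|S'|=1$, $|N_B(S')|=1$, $D=2$ satisfies it with no $2$-matching present); use the classical criterion $|N_B(S')|\geq D|S'|$ as in the paper's~\eqref{eq:Hall} instead---your verification already delivers this stronger inequality in both ranges of $k$, so nothing else changes.
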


\begin{proof} Let $B=(S\cup W,E')$ be the subgraph of $G$ as described
above. In order to prove the lemma, we use the following version of
Hall's Theorem (see, e.g., \cite{West}). A bipartite graph $B=(S\cup
W,E')$ contains an $s$-matching from $S$ to $W$, if and only if the
following holds:
\begin{equation}
  \label{eq:Hall}
  \text{ For every } X\subseteq S \text{ we have } |N_B(X)|\geq s|X|.
\end{equation}

Suppose that \eqref{eq:Hall} fails for $B$ with $s=\log^{0.2}n$.
Then, there exists a subset $X\subseteq S$  for which $|N_B(X)\cup
X|\leq (s+1)|X|$. In particular, letting $Y=N_B(X)\cup X$, by
$(iii)$, we conclude that $e_B(Y)\geq |Y|\frac{\log
n}{(s+1)(\log\log n)^2}\geq |Y|\frac{\log^{0.8}n}{2(\log\log n)^2}$.
Since $|Y|\leq (s+1)|X|\leq \frac{2n}{\log ^{0.7}n}$, and since
$|Y|\log^{0.7}n>|Y|^2p \left(n/|Y|\right)^{1/2}$ for every $|Y|\leq
n/\log^{0.6}n$, we obtain a contradiction to $(P10)$ and $(P11)$.
\end{proof}

Now we are ready to prove Lemma \ref{lemma:ugly}.

\begin{proof}[Proof of Lemma \ref{lemma:ugly}]
Let $\alpha<\varepsilon$, let $D\in \mathbb N$ and let $W,S\subseteq
[n]$ and $\mathcal C_1\subseteq \mathcal C$ as described in the
assumptions of the lemma. For every $s\in S$, we define a graph
$\mathcal H_s$ with vertex set $W\cup \mathcal C_1$ in the following
way. For every $w\in W$ and $x\in \mathcal C_1$, $wx\in E(\mathcal
H_s)$ if and only if $sw\in E(G)$ and $c(sw)=x$. Consider the family
$\mathcal H:=\{\mathcal H_s: s\in S\}$, and note that in order to
prove the lemma, we need to show that there exists a function
$f:S\times [D]\rightarrow \bigcup_{s\in S}E(\mathcal H_s)$ such that
$f(s,i)\in E(\mathcal H_s)$ for every $s$ and $i$, and $f(s,i)\cap
f(s',i')=\emptyset$ for $(s,i)\neq (s',i')$. To this end, we make
use of Theorem \ref{thm:AH}. All we need to show is that
for every $T\subseteq S$, the graph $\bigcup_{t\in T}E(\mathcal
H_t)$ contains a matching of size greater than $2D(|T|-1)$. We
distinguish between two cases:

{\bf Case 1:} $|T|\leq \frac{n}{\log^{0.9}n}$. Consider the
bipartite graph $B=(T\cup W,E')$, where $E':=\{tw: t\in T, w\in W
\text{ and } c(tw)\in \mathcal C_1\}$. By applying Lemma \ref{aux3}
to $B$, we conclude that there exists an $s$-matching from $T$ to
$W$ in $B$, where $s=\log^{0.2}n$. Let $M$ be such a matching, and
note that by applying Lemma \ref{lemma:auxForUgly1} to $M$, it
follows that the number of colors appearing in $M$ is at least
$2D|T|$. Now, one can easily deduce that $\bigcup_{t\in T}E(\mathcal
H_t)$ contains a matching of size at least $2D|T|>2D(|T|-1)$.

{\bf Case 2:} $\frac{n}{\log^{0.9}n}\leq |T|\leq |S|$. Let $M$ be a
matching in $\bigcup_{t\in T}E(\mathcal H_t)$ of maximum size, let
$\mathcal C_2:=\{x\in \mathcal C_1:\exists w\in W \text{ s.t } wx\in
M\}$ and let $W':=\{w\in W: \exists x\in \mathcal C_2\text{ s.t }
wx\in M\}$. Suppose that $|M|\leq 2D(|T|-1)<2D|T|$. In particular,
it means that for every $v\in T$ and $w\in W$ we have $vw\notin
E(G)$, or $c(vw)\in \mathcal C_2$, or $w\in W'$, or $c(vw)\notin
\mathcal C_1$, which contradicts Lemma \ref{lemma:auxForUgly2}. This
completes the proof.
\end{proof}

\subsection{Expansion properties of subgraphs of random graphs}

In the following lemma we show, that given an edge colored graph $G$,
one can find two subsets of colors $\mathcal C_1,\mathcal C_2$ and a
vertex subset $W$ which inherits some desired properties from $G$.
The statement of the lemma is adjusted so as to facilitate its
application in the proof of Theorem \ref{main}.

\begin{lemma}
  \label{lemma:vertex partition}
Let $0<\alpha,\delta,\varepsilon<1/2$ be constants and let $n$ be an
integer. Let $G$ be an edge colored graph on $m\geq (1-o(1))n$
vertices, and let $\mathcal C^*$ be its set of colors, of size
$|\mathcal C^*|\geq (1+\varepsilon/2)n$. Suppose that $\delta \log
n\leq \delta(G)\leq \Delta(G)\leq 10\log n$, that each color appears
at most $10\log n$ times in $G$, and that for each $v\in V(G)$,
$d^c_G(v)\geq d_G(v)-2$. Then one can find subsets $\mathcal C_0,
\mathcal C_1\subseteq \mathcal C^*$, and $W\subseteq V(G)$
satisfying  the following properties:
\begin{enumerate} [$(i)$]
\item $|W|=(1+o(1))\frac{n}{\log\log n}$, and
\item $\mathcal C_0$ and $\mathcal C_1$ are two disjoint subsets of sizes $(1+o(1))\alpha n$, and
\item for every $w\in W$, $d_{\mathcal C_0}(w,W)\in\left(\frac{\alpha d_G(w)}{2\log\log n},\frac{2\alpha d_G(w)}{\log\log
n}\right)$, and
\item for every $v\in V(G)$, $d_{\mathcal C_1}(v,W)\in
\left(\frac{\alpha d_G(v)}{2\log\log n},\frac{2\alpha
d_G(v)}{\log\log n}\right)$, and
\item for every $x\in \mathcal C_0$, $x$ appears on at most
$\frac{100\log n}{(\log\log n)^2}$ edges in $G[W]$.
\end{enumerate}
\end{lemma}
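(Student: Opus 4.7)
The strategy is to construct $W$, $\mathcal{C}_0$, $\mathcal{C}_1$ by a random sampling argument and verify all five properties hold simultaneously via Chernoff concentration combined with the asymmetric Lov\'asz Local Lemma. First I would sample the three sets independently: put each vertex of $V(G)$ into $W$ with probability $p_W := 1/\log\log n$, and independently assign each color of $\mathcal{C}^*$ to one of the three buckets $\mathcal{C}_0$, $\mathcal{C}_1$, ``discard'' with probabilities $q := \alpha n/|\mathcal{C}^*|$, $q$, and $1-2q$. Since $|\mathcal{C}^*| \ge (1+\varepsilon/2)n$ we have $q<1/2$, so this is well-defined. The sizes $|W|$ and $|\mathcal{C}_0|,|\mathcal{C}_1|$ are then binomial with means $(1+o(1))n/\log\log n$ and $(1+o(1))\alpha n$, and Chernoff (Lemma \ref{Chernoff}) gives (i) and (ii) with failure probability $\exp(-\omega(\log n))$.

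For the local properties, fix $v \in V(G)$. The hypothesis $d^c_G(v) \ge d_G(v) - 2$ implies that the multiset of colors on edges incident to $v$ has at most two repetitions, so $d_{\mathcal{C}_1}(v, W)$ is (up to $O(1)$ correlated terms) a sum of independent Bernoullis with mean $(1+o(1))\alpha d_G(v) \cdot \frac{n}{|\mathcal{C}^*|\log\log n}$, which lies strictly inside the target interval $(\alpha d_G(v)/(2\log\log n),\, 2\alpha d_G(v)/\log\log n)$. Since $d_G(v) \ge \delta\log n$, Chernoff gives bad-event probability at most $\exp(-c_1\log n/\log\log n)$ for some $c_1 = c_1(\alpha,\varepsilon,\delta) > 0$. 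Property (iii) reduces to the same analysis after conditioning on $w \in W$; property (v) uses that each color $x$ appears on at most $10\log n$ edges of $G$, so the expected number of edges of color $x$ inside $G[W]$ is at most $10\log n/(\log\log n)^2$, and Chernoff bounds the probability of exceeding ten times this mean by an expression of the same flavor.

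Each local bad event depends only on the random labels of at most $O(\log n)$ vertices (those within two steps of the fixed vertex, or the endpoints of edges of color $x$) and $O(\log n)$ colors (those incident to $v$, or the single color $x$). Hence the dependency graph has maximum degree $\Delta = O(\log^2 n)$. I would choose uniform weights $x_i := \exp(-c_1\log n/(2\log\log n))$; then $\Delta\cdot x_j = o(1)$, so $\Pr[A_i] \le x_i\prod_{j\sim i}(1 - x_j)$, and the asymmetric LLL (Lemma \ref{ALLL}) gives $\Pr[\bigcap_i \overline{A_i}] \ge \prod_i (1 - x_i) > 0$. Since $\prod_i (1-x_i)$ still beats the super-polynomially small failure probability for (i) and (ii), the union bound $\Pr[\text{some property fails}] < 1$ provides a realisation satisfying all five properties, yielding the desired $W$, $\mathcal{C}_0$, $\mathcal{C}_1$.

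\textbf{Main obstacle.} The delicate point is that each local bad-event probability is only quasi-polynomially small, of order $\exp(-\Omega(\log n/\log\log n))$, which is far too large for a naive union bound over $\Theta(n)$ vertices. The LLL is therefore essential, and one must verify both that the weights $x_i$ can be chosen large enough to absorb $\Pr[A_i]$ and small enough (given the $O(\log^2 n)$ dependency-degree) to keep $\prod_i(1-x_i)$ positive and ultimately dominant over the Chernoff error for the global size events. A secondary technical point is that the calculation of $\mathbb{E}[d_{\mathcal{C}_1}(v,W)]$ places the expectation strictly inside the prescribed interval only because the hypothesis $|\mathcal{C}^*| \ge (1+\varepsilon/2)n$ bounds the color-sampling rate $q$ away from $0$; the applications in Theorem \ref{main} implicitly give $|\mathcal{C}^*| = \Theta(n)$, which is what makes the whole construction balance out.
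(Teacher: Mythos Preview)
Your approach is essentially the one the paper takes: sample $W$ vertex-by-vertex with probability $1/\log\log n$, sample $\mathcal C_0,\mathcal C_1$ color-by-color with probability $\approx\alpha$, bound each bad event by Chernoff, and invoke the asymmetric Local Lemma. The paper folds the global size events $A_W,C_0,C_1$ directly into the LLL (with weights equal to the square roots of their probabilities) rather than handling them separately at the end, but your separation works just as well.

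Two technical points to tighten. First, for property~(v) the indicators ``edge $e$ lies in $G[W]$'' over the edges of a fixed color class are \emph{not} independent when those edges share endpoints, so Chernoff does not apply directly; the paper uses the hypothesis $d^c_G(v)\ge d_G(v)-2$ to split each color class into at most four matchings (within which the indicators are independent) and applies Chernoff to each matching. Second, once you carry this out, the bound you obtain is $\Pr[B_x]\le \exp\bigl(-\Theta(\log n/(\log\log n)^2)\bigr)$, which is \emph{larger} than your stated uniform weight $x_i=\exp(-c_1\log n/(2\log\log n))$, so the LLL inequality $\Pr[B_x]\le x_i\prod(1-x_j)$ fails as written. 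Either enlarge your uniform weight to $\exp\bigl(-\Theta(\log n/(\log\log n)^2)\bigr)$ (the verification that $\Delta\cdot x_i=o(1)$ and that $\prod_i(1-x_i)$ still dominates the global Chernoff error goes through), or, as the paper does, use non-uniform weights $z_x=\sqrt{\Pr[B_x]}$ for the color events and $x_{i,v}=c_0\Pr[\Gamma_i(v)]$ for the degree events.
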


\begin{proof} Let $\mathcal C_0,\mathcal C_1\subseteq \mathcal C^*$ be two disjoint random subsets, obtained in the following way: for each element of $\mathcal C^*$ toss a coin with probability $2\alpha$ to decide whether it belongs to $\mathcal C_0\cup \mathcal C_1$, then, with probability $1/2$ decide to which of these sets it belongs.
All these choices are independent. Let $W\subseteq
V(G)$ be a random subset of vertices, obtained by picking each $v\in
V(G)$ with probability $\frac{1}{\log\log n}$, independently at
random. We wish to show that the obtained sets satisfy $(i)$-$(v)$
with positive probability. In order to do so, we consider several
types of events. First, let $A_W$ denote the event ``$|W|\notin
(1\pm o(1))\frac{n}{\log\log n}$". Second, for each $i\in \{0,1\}$,
let $C_i$ denote the event ``$|\mathcal C_i|\notin (1\pm o(1))\alpha
n"$. Third, for each vertex $v\in V(G)$, let $\Gamma_i(v)$ ($i\in
\{0,1\}$) denote the event ``$d_{\mathcal C_i}(v,W)\notin
\left(\frac{\alpha d_G(v)}{2\log\log n},\frac{2\alpha
d_G(v)}{\log\log n}\right)$". Lastly, for each $x\in \mathcal C^*$,
let $B_x$ be the event ``more than $\frac{100\log n}{(\log\log
n)^2}$ edges in $G[W]$ are colored $x$". With this notation at
hand, we wish to show that

$$\Pr\left[\overline{A_W}\cap \overline{C_1}\cap \overline C_2\cap
\left(\bigcap_{i\in \{0,1\},v\in
V(G)}\overline{\Gamma_i(v)}\right)\cap \left(\bigcap_{x\in \mathcal
C^*}\overline{B_x}\right)\right]>0.$$

First, define $\mathcal E:=\left\{A_W,C_1,C_2,\Gamma_i(v),B_x: v\in
V(G), i\in\{0,1\}, x\in \mathcal C^*\right\},$ and let us estimate
the probabilities of each of the events $X\in \mathcal E$. By using
Chernoff's bounds we trivially get
\begin{enumerate}[$(a)$]
\item $\Pr\left[A_W\right]= \exp\left(-\Theta(n/\log\log n)\right)$,
\item $\Pr\left[C_i\right]= \exp\left(-\Theta(\alpha n)\right)$, and
\item $\Pr\left[\Gamma_i(v)\right]= \exp\left(-C\alpha
d_G(v)/\log\log n\right)$, where $C$ is an absolute constant which
does not depend on $\alpha$.
\end{enumerate}

For estimating $\Pr\left[B_x\right]$, note that since $d^c_G(v)\geq
d_G(v)-2$ for every $v\in V(G)$, it follows that each color class
can be partitioned into at most four matchings, each of size at most
$10\log n$ (the maximum number of edges with the same color in $G$).
Fix such a partition (into matchings) for each color class $x$. It
follows that if $B_x$ fails, then in at least one of the four
matchings, at least $\frac{25\log n}{(\log\log n)^2}$ edges have
been chosen. Since in each matching these choices are independent,
and since for a fixed edge $e$, the probability that $e\in W$ is
$\frac{1}{(\log\log n)^2}$, it follows by Chernoff's bounds that

\begin{enumerate}[$(d)$]
\item $\Pr\left[B_x\right]\leq e^{-\frac{\log n}{2(\log\log n)^2}}$.
\end{enumerate}

Next, let us define a dependency graph $D$ for $\mathcal E$, where
the edges of $D$ are as follows:

\begin{itemize}
\item All pairs $A_WX$, where $X\in \mathcal E$, and
\item all pairs $C_iX$, where $i\in \{0,1\}$ and $X\in \mathcal E$,
and
\item all pairs $\Gamma_i(v)\Gamma_j(u)$, where $i\neq j$ and
$v=u$, or $v\neq u$ and $N_G(v)\cap N_G(u)\neq \emptyset$, or if the
same color $c\in \mathcal C^*$ appears on edges incident to both $u$
and $v$, and
\item all pairs $\Gamma_i(v)B_x$ for which there exists an edge $uw$
such that $\{u,w\}\cap \left(\{v\}\cup N_G(v)\right)\neq \emptyset$
and $c(uw)=x$, and
\item all pairs $B_xB_y$, for which there exist two edges $e$ and $f$,
of colors $x$ and $y$, respectively, such that $e\cap f\neq
\emptyset$.
\end{itemize}

Now, for some fixed constant $c_0> 1$, define $x_W=\sqrt{
\Pr\left[A_W\right]}$, $y_i=\sqrt{\Pr\left[C_i\right]}$ (where $i\in
\{0,1\}$), $x_{i,v}=c_0\Pr\left[\Gamma_i(v)\right]$ (where $i\in
\{0,1\}$ and $v\in V(G)$), and $z_x=\sqrt{\Pr[B_x]}$ for $x\in
\mathcal C^*$. Note that
$$\Pr\left[A_W\right]= \exp\left(-\Theta(n/\log\log n)\right)\leq x_W\prod_{i,v}(1-x_{i,v}),$$
and
$$\Pr\left[C_i\right]=\exp\left(-\Theta(\alpha n)\right)\leq
y_i\prod_{i,v}(1-x_{i,v}),$$ and
$$\Pr\left[\Gamma_{i}(v)\right]=\exp\left(-C\alpha
d_G(v)/\log\log n\right)\leq c_0x_{i,v}\prod_{j,u:
\Gamma_{j}(u)\Gamma_i(v)\in E(D)}(1-x_{j,u})\prod_{x\in N^c_G(v)}
(1-z_x),$$ and
$$\Pr\left[B_x\right]\leq z_x \prod_{x \in
N^c_G(v),i}(1-x_{i,v})\prod_{y: B_xB_y\in E(D)} (1-z_y).$$

(The last two inequalities hold because the corresponding ``degrees
of dependencies" are some $polylog (n)$). All in all, one can apply
the Asymmetric Local Lemma (Lemma \ref{ALLL}) and obtain the desired
claim.
\end{proof}

Now, let $\frac{\log n}n\leq p\leq \frac{2\log n}n$, let
$c=(1+\varepsilon)n$, and let $G\sim \mathcal G_c(n,p)$. We show
that w.h.p. $G$ is such that every (not necessarily induced)
subgraph $G_1\subseteq G$ on $(1+o(1))n/\log\log n$ vertices with
some degree constraints is also a very good expander.

\begin{lemma}
  \label{lemma:properties of G1}
Let $\alpha,\delta,\varepsilon>0$, let $\frac{\log n}n\leq p\leq
\frac{2\log n}n$, and let $c=(1+\varepsilon)n$. Then a graph $G\sim
\mathcal G_c(n,p)$ is w.h.p.\ such that the following properties
hold. Suppose that
\begin{enumerate} [$(i)$]

\item $W\subseteq [n]$ is of size $(1+o(1))n/\log\log n$, and
\item $\mathcal C_0\subseteq \mathcal C$ is of size $(1+o(1))\alpha
n$.
\end{enumerate}
Then $H:=G[W;\mathcal C_0]$ satisfies:
\begin{enumerate}[$(a)$]
\item For every subset $X\subseteq W$, if $|X|\leq
\frac{n}{\log^{4 /3}n}$, then $e_H(X)\leq 8|X|$, and
\item for every $X\subseteq W$ of size $\frac{n}{\log^{4/3}n}\leq
|X|\leq |W|$, we have $e_H(X)\leq
|X|^2p\left(\frac{n}{|X|}\right)^{1/2}$, and
\item for every $X\subseteq W$, if $|X|\geq n/\log^{0.4}n$, then
$e_H(X)\leq \alpha|X|^2p$, and
\item for every two subsets $X,Y\subseteq W$ satisfying
$|X||Y|p=\omega(n)$, we have $e_H(X,Y)\geq \frac{\alpha}{2}|X||Y|p$.
\end{enumerate}
\end{lemma}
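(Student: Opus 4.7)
The plan is to derive each of the four conclusions directly from the corresponding typical properties of $G$ established in Lemma~\ref{lemma:Properties}, exploiting the fact that $H:=G[W;\mathcal{C}_0]$ is by definition a subgraph of $G$ whose edges are precisely those of $G[W]$ carrying a color from $\mathcal{C}_0$. Every upper bound on edge counts in $G$ therefore transfers immediately to $H$, and the color-sensitive bounds of (P7)--(P8) apply to $H$ because $|\mathcal{C}_0|=(1+o(1))\alpha n$ fits the pattern ``size $\beta n$'' used there. In particular, no conditioning or additional concentration argument is needed; the lemma is essentially a packaging statement.

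For (a) and (b), since $X\subseteq W\subseteq [n]$ and $H\subseteq G$, I would simply observe $e_H(X)\le e_G(X)$ and invoke (P10) when $|X|\le n/\log^{4/3}n$ and (P11) when $n/\log^{4/3}n\le |X|\le |W|$. The upper bound $|W|=(1+o(1))n/\log\log n$ lies comfortably in the range where (P11) is valid, so nothing extra has to be verified.

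For (c) and (d) I would apply (P7) and (P8) with $\beta:=|\mathcal{C}_0|/n=(1+o(1))\alpha$. In (c), the hypothesis $|X|\ge n/\log^{0.4}n$ forces $|X|^{2}p\ge n\log^{0.2}n=\omega(n)$, so (P7) gives $e_H(X)\le \beta|X|^{2}p$, which is the desired bound $\alpha|X|^{2}p$ up to the $(1+o(1))$ conventions used throughout the section (equivalently, one applies (P7) to a superset $\mathcal{C}_0'\supseteq\mathcal{C}_0$ of size exactly $\alpha n$ when $\beta>\alpha$, after an infinitesimal adjustment of $\alpha$). In (d), the assumption $|X||Y|p=\omega(n)$ is precisely the hypothesis of (P8), which yields $e_H(X,Y)\ge (\beta/2)|X||Y|p\ge (\alpha/2)|X||Y|p$ (for the lower bound one may pass to a subset $\mathcal{C}_0'\subseteq\mathcal{C}_0$ of size exactly $\alpha n$ to avoid any constant loss).

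There is no genuine obstacle: every conclusion is a direct consequence of a typical property of $G$ that holds w.h.p.\ uniformly in the relevant sets. Thus the union-bound work has already been absorbed into Lemma~\ref{lemma:Properties}, and the present lemma just records which of those properties are inherited by the restriction $H=G[W;\mathcal{C}_0]$. The only mildly delicate point is checking that the ranges of $|X|$ and $|X||Y|$ in (c) and (d) place us inside the regime of validity of (P7) and (P8), which I carry out above.
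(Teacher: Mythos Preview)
Your proposal is correct and matches the paper's own approach: the paper simply states that all four properties follow from Lemma~\ref{lemma:Properties} (and analogous arguments) and leaves the details to the reader, and you have supplied exactly those details by matching (a)--(d) to (P10), (P11), (P7), (P8) respectively and checking the size hypotheses. The only cosmetic point is that your superset/subset bookkeeping for the $(1+o(1))\alpha n$ versus $\alpha n$ discrepancy in (c) and (d) is phrased a bit awkwardly (when $|\mathcal C_0|>\alpha n$ one cannot pass to a \emph{superset} of size $\alpha n$), but your ``infinitesimal adjustment of $\alpha$'' remark is precisely the right fix and is consistent with the loose-constant conventions used throughout the paper.
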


\begin{proof}
  All these properties follow from the properties in Lemma
  \ref{lemma:Properties} and similar arguments, hence are left as an
  exercise for the reader.
\end{proof}

Let us define the following useful notion of a $(k,d)$-expander.

\begin{definition}
  \label{def:expander}
  A graph $G$ is called a \emph{$(k,d)$-expander} if for every
  subset $X\subseteq V(G)$ of size at most $k$ we have
  $$|N_G(X)\setminus X|\geq d|X|.$$
\end{definition}

The following lemma is almost identical to Lemma 2.4 in \cite{KLS}
(although with few minor modifications). For the convenience of the
reader, we briefly sketch the proof.

\begin{lemma}
  \label{RainbowExpander}
Let $0<\varepsilon,\delta<1$ and let $\alpha<\delta e^{-100}$ be
constants. Let $\frac{\log n}n\leq p\leq \frac{2\log n}n$, and let
$c=(1+\varepsilon)n$. Then there exists $d_0\in \mathbb{N}$ such
that for every $d\geq d_0$, a graph $G\sim \mathcal G_c(n,p)$ is
w.h.p.\ such that the following holds. Suppose that $W\subseteq [n]$
is of size $(1+o(1))n/\log\log n$, $\mathcal C_0\subseteq \mathcal
C$ of size $(1+o(1))\alpha n$ and $H:=G[W;\mathcal C_0]$ is a
subgraph of $G$ satisfying $\frac{\alpha \delta \log n}{2\log\log
n}\leq \delta(H)\leq \Delta(H)\leq \frac{20\alpha\log n}{\log\log
n}$ and
 properties (a)--(d) of Lemma \ref{lemma:properties of G1}. Moreover, assume that no color from $\mathcal C_0$ appears in $H$ more than $\frac{100\log n}{(\log\log n)^2}$ times. Then,
there exists a subgraph $R\subseteq H$ satisfying the following:
\begin{enumerate}[$(a)$]
\item $R$ is rainbow, and
\item $R$ is a $(k,100)$-expander (where $k=\alpha\delta |W|/100$), and
\item $|E(R)|\leq d|W|$.
\end{enumerate}
\end{lemma}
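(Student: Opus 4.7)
Following the blueprint of Lemma 2.4 of \cite{KLS}, I shall construct $R$ as a random rainbow subgraph of $H$ and verify properties (a)--(c) via concentration and union bounds. For each color $c \in \mathcal C_0$ that appears in $H$, independently activate $c$ with probability $q$ (to be chosen below), and if $c$ is active include in $R$ a uniformly random edge of $H$ of color $c$; this produces a rainbow subgraph by construction. For the edge count, $|E(R)|$ is a sum of independent Bernoullis indexed by colors, with mean $qM$; using $M \le |E(H)| \le |W|\Delta(H)/2 = O(\alpha |W| \log n/\log\log n)$, an appropriate choice of $q$ makes $\mathbb{E}|E(R)| \le d|W|/2$, and Chernoff closes property (c).

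For the expansion property, observe that the failure of $(k,100)$-expansion at a set $X$ of size $s$ entails the existence of $Y \supseteq X$ with $|Y| = 101s$ such that no edge of $R$ goes from $X$ to $W \setminus Y$. Grouping edges of $E_H(X, W \setminus Y)$ by color and using that activation decisions are mutually independent across colors,
\[
\Pr\bigl[R\text{ has no edge from } X \text{ to } W \setminus Y\bigr] = \prod_{c} \Bigl(1 - q\,f_c/m_c\Bigr) \le \exp\!\left(-\frac{q(\log\log n)^2}{100 \log n}\cdot e_H(X, W \setminus Y)\right),
\]
where $f_c$ counts the edges of color $c$ in $E_H(X, W \setminus Y)$, and the inequality uses $m_c \le 100\log n/(\log\log n)^2$. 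I lower-bound $e_H(X, W \setminus Y)$ using properties (a) and (d) of Lemma \ref{lemma:properties of G1} and the hypothesis $\delta(H) \ge \alpha\delta\log n/(2\log\log n)$: for small $|X|$, property (a) combined with the minimum degree gives $e_H(X, W \setminus Y) = \Omega(s\log n/\log\log n)$; for larger $|X|$, property (d) together with $|W \setminus Y| \ge |W|/2$ (which holds since $k = \alpha\delta|W|/100$ and $\alpha < \delta e^{-100}$ imply $101 k/|W| < 1/2$) gives the same bound. Substituting yields the per-event bound $\exp(-\Omega(d s))$, and a union bound over $(X, Y)$ using $\binom{|W|}{s}\binom{|W|}{100s} \le (e|W|/s)^{101s}$ gives $o(1)$ provided $d \ge d_0$ for a sufficiently large absolute constant $d_0$ depending on $\alpha, \delta, \varepsilon$.

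The main obstacle is the tension between requiring $q$ small (to cap $|E(R)|$) and requiring $q$ large (so that per-event probabilities in the expansion union bound beat the counting factor $\binom{|W|}{s}\binom{|W|}{100s}$). The bound $m_c \le 100\log n/(\log\log n)^2$ on color-class sizes is precisely what resolves this tension: it inflates the effective edge-selection probability appearing in the expansion calculation by a factor of $(\log\log n)^2/\log n$, making up for the smallness of $q$. The remaining technical work is a careful case analysis, splitting the range of $s$ into small, moderate, and large regimes to ensure the union bound converges uniformly in $s$; this follows the pattern of Lemma 2.4 of \cite{KLS} with minor adjustments to account for the rainbow constraint.
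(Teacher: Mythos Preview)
Your randomization is not the one in the paper, and the difference is fatal for small sets. In the paper (and in Lemma~2.4 of \cite{KLS}) the sampling is \emph{vertex-based}: every $w\in W$ independently picks $d$ random incident edges of $H$, and $R$ is the union of these choices. Once one avoids the events ``some edge is chosen twice'' and ``two edges of the same color are both chosen'', every vertex has degree exactly $d$ in $R$; this is why small-set expansion follows \emph{deterministically} from property~(a) of Lemma~\ref{lemma:properties of G1}, and the probabilistic work (via the asymmetric Local Lemma, not a union bound) is confined to the rainbow events $A(e_1,e_2)$ and the large-set events $B(X)$ with $|X|\ge 101n/\log^{4/3}n$.

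Your color-based scheme (activate each color with probability $q$, keep one random edge per active color) gives rainbowness for free, but it does not control vertex degrees. With $q$ chosen so that $\mathbb E|E(R)|\le d|W|/2$, one is forced to take $q=\Theta\bigl(d/(\alpha\log\log n)\bigr)$, and then your own estimate yields, for a pair $(X,Y)$ with $|X|=s$,
\[
\Pr\bigl[E_R(X,W\setminus Y)=\emptyset\bigr]\le
\exp\!\left(-\,\frac{q(\log\log n)^2}{100\log n}\cdot e_H(X,W\setminus Y)\right)
\le \exp\bigl(-\Theta(\delta d s)\bigr).
\]
For $s=1$ the number of pairs $(X,Y)$ is of order $|W|^{101}$, so the union bound reads $|W|^{101}\exp(-\Theta(\delta d))$, which is $o(1)$ only when $d=\Omega(\log n)$, contradicting the requirement that $d_0$ be an absolute constant. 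The Local Lemma does not rescue this either: the event ``$d_R(v)<100$'' has probability $\exp(-\Theta(d))$ but depends on all colors incident to $v$ and, through them, on $\Theta\bigl(\log^2 n/(\log\log n)^3\bigr)$ other vertices, so the LLL condition again forces $d\to\infty$. In short, the ``small $s$'' regime in your case split cannot be closed by any amount of care; you need a sampling rule that \emph{guarantees} each vertex a constant degree in $R$, and then pay for the rainbow constraint via the Local Lemma, exactly as the paper does.
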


\begin{proof} (Lemma 2.4 in \cite{KLS}.) Let $d$ be a large enough
integer. Condition on $G$ satisfying all the
properties of Lemma \ref{lemma:Properties}. Now, for every $w\in W$,
let $w$ choose $d$ random edges of $H$ incident with $w$ (with
repetitions), and let $\Gamma(w)$ be the set of the edges chosen by
$w$. Let $R$ be the graph whose edge set is $\bigcup_{w\in
W}\Gamma(w)$. We wish to show that $R$ satisfies $(a)-(c)$ with
positive probability.

We consider few types of events. First, the events regarding the
rainbow part. For every two edges $e_1,e_2$ of the same color, let
us denote by $A(e_1,e_2)$ the event ``both $e_1$ and $e_2$ are in
$R$" (in case $e_1\neq e_2$), and ``$e_1$ is chosen in more than one
trial" (in case $e_1=e_2$). Define
$$\mathcal A:=\left\{A(e_1,e_2): e_1 \textrm{ and }e_2 \textrm{ have
the same color}\right\}.$$

Second, we consider the events ensuring the expansion of sets. For a
set $X\subseteq W$, let $B(X)$ denote the event that $e_R(X)\geq
\frac{d}{101}|X|$, and for every $\frac{n}{\log^{4/3}n}\leq x\leq
k$, let
$$\mathcal B_x=\{B(X):|X|=101x\}.$$


Clearly, if none of the events in $\mathcal A$ happens, then $(a)$
and $(c)$ hold. Now, assume in addition that none of the events
$\mathcal B_x$ happens, and we wish to show that $(b)$ holds. Let
$X\subseteq V(H)$ be a subset of size at most $k$, and we wish to
show that $|N_H(X)\setminus X|\geq 100|X|$. Assume otherwise, we
obtain a subset $X$ for which $|X\cup N_H(X)|<101|X|$. Since none of
the events in $\mathcal A$ holds, it follows that $|E_H(X\cup
N_H(X))|\geq d|X|> d|X\cup N_H(X)|/101$. Now, if $|X\cup N_H(X)|\leq
\frac{n}{\log^{4/3}n}$, then for a large enough $d$ it contradicts
$(a)$ of Lemma \ref{lemma:properties of G1}. If
$\frac{n}{\log^{4/3}n}\leq |X\cup N_H(X)|\leq 101k$, then it
contradicts the event  $B(X\cup N_H(X))$.

It thus suffices to show that with positive probability none of
these events occurs. To this end we make use of the Local Lemma. We
estimate the probabilities of each event above, define a dependency
graph $D$ and estimate its degrees.

{\bf The family $\mathcal A$:} For a fixed pair $e_1,e_2\in E(G)$
of the same color we have
$$\Pr\left[A(e_1,e_2)\right]\leq \left(\frac{4d \log\log
n}{\alpha\delta\log n}\right)^2.$$

Define $x=c_0\left(\frac{4d \log\log n}{\alpha\delta\log
n}\right)^2$ for some constant $c_0> 1$. For the ``degree of
dependency" within $\mathcal A$, note that $A(e_1,e_2)$ depends on
$A(f_1,f_2)$ if and only if $e_1\cup e_2$ intersects $f_1\cup f_2$.
Now, recall that $\Delta(H)\leq \frac{20\alpha \log n}{\log\log n}$,
and that each color class contains at most $\frac{100\log
n}{(\log\log n)^2}$ edges. Therefore, the number of events in
$\mathcal A$ which are neighbors of $A(e_1,e_2)$ in the dependency
graph is at most $4\Delta(H)\frac{100\log n}{(\log\log n)^2}\leq
\frac{8000\alpha \log^2n}{(\log\log n)^3}$.

{\bf The family $\mathcal B_t$:} For a fixed set $X$ of size $101t$,
similarly to \cite{KLS}, one can show that

$$\Pr\left[B(X)\right]\leq \binom{e_H(X)}{dt}\cdot (2d)^{dt}\cdot
\left(\frac{2\log\log n}{\alpha\delta \log n}\right)^{dt},$$
where for $t\leq n/(101\log^{0.4}n)$, by assumption $(b)$ of Lemma
\ref{lemma:properties of G1} we have $e_H(X)\leq 101^2t^2p
\left(\frac{n}{101t}\right)^{1/2}$, and therefore,

$$\Pr\left[B(X)\right]\leq \left(10^{9}\left(\frac{t}{n}\right)^{1/2}\log\log n/\alpha\delta\right)^{dt}.$$

For $t\leq n/(101\log^{0.4}n)$, define
$y_t=\left(10^9e\left(\frac{t}{n}\right)^{1/2}\log\log
n/\alpha\delta\right)^{dt/2}$ and note that for an appropriately large $d$
we have
$$\sum_{t=n/(\log^{4/3}n)}^{n/\log^{0.4}n} |\mathcal B_{t}|\cdot y_t=o(1).$$

Now, for $n/(101\log^{0.4}n)\leq t\leq k$, we use $(c)$ of Lemma
\ref{lemma:properties of G1} and obtain

$$\Pr\left[B(X)\right]\leq \left(\frac{10^9 t\log \log n}{\delta
n}\right)^{dt}.$$

Define $y_t=e^{C_1t}\left(\frac{4 et\log \log n}{\delta
n}\right)^{dt}$ for some constant $C_1>0$ and note that for
appropriate choices of $\delta,d$ and $C_1$ we obtain

$$\sum_{t=n/(101\log^{0.4}n)}^{k} |\mathcal B_t|\cdot y_t=\sum_{t=n/(101\log^{0.4}n)}^{\alpha\delta n/\log\log n}
\binom{n/\log\log n}{101t}e^{C_1t}\left(\frac{4 et \log\log
n}{\delta n}\right)^{dt}=o(1).$$

 To compute the ``degree of dependency" with members of $\mathcal
A$ in $D$, note that $B(X)$ is correlated with an event $A(e_1,e_2)$
if $(e_1\cup e_2)\cap X\neq \emptyset$. Therefore, since the maximum
degree of $H$ is at most $\frac{20\alpha \log n}{\log \log n}$ and
since each color appears at most $\frac{100\log n}{(\log\log n)^2}$
times, the number of events $A(e_1,e_2)$ correlated with $B(X)$ is
upper bounded by

$$|X|\frac{20\alpha \log n}{\log \log n}\frac{100\log n}{(\log\log n)^2}=|X|\frac{2000\alpha \log^2n}{(\log\log n)^3}.$$

In order to apply the Asymmetric Local Lemma (Lemma \ref{ALLL}) we need to show that the following
inequalities hold.

$$\Pr\left[A(e_1,e_2)\right]\leq x\cdot (1-x)^{\frac{8000\alpha
\log^2n}{(\log\log n)^3}}\left(\prod_t(1-y_t)^{|\mathcal
B_t|}\right),$$

$$\Pr\left[B(X)\right]\leq y_t\cdot (1-x)^{|X|\frac{2000\alpha \log^2n}{(\log\log
n)^3}}\cdot\left(\prod_t(1-y_t)^{|\mathcal B_t|}\right).$$

For the first inequality, note that since $x=c_0\left(\frac{4d
\log\log n}{\alpha\delta\log n}\right)^2$, it follows that

\begin{align*}
x\cdot (1-x)^{\frac{8000\alpha \log^2n}{(\log\log
n)^3}}\left(\prod_t(1-y_t)^{|\mathcal B_t|}\right)&\geq
c_0\left(\frac{4d \log\log n}{\alpha\delta\log
n}\right)^2e^{-2c_0\left(\frac{4d \log\log n}{\alpha\delta\log
n}\right)^2\frac{8000\alpha \log^2n}{(\log\log n)^3}}e^{-2\sum_t |\mathcal B_t|y_t}\\
&=(1+o(1))c_0 \left(\frac{4d \log\log n}{\alpha\delta\log
n}\right)^2\\
&\geq \Pr\left[A(e_1,e_2)\right].
\end{align*}

(we used the facts that $\sum |\mathcal B_t|\cdot y_t=o(1)$ and that
for small values of $x$ we have $1-x\geq e^{-2x}$).

The second inequality is even easier to verify and is left as
an exercise for the reader.
%
%
%
%
\end{proof}

\subsection{Finding a long rainbow path}

In this section we state the following lemma, which follows almost
identically from the proof of Lemma 4.4 in \cite{BKS}. Before doing
so, we introduce the following definition:

\begin{definition}
  A graph $G$ on $n$ vertices whose set of edges is colored  is called
  \emph{$k$-rainbow-pseudorandom}, if for every two disjoint subsets
  of vertices $A,B\subseteq V(G)$ of size $|A|=|B|=k$, the number of colors appearing on
  the edges of $G$ between $A$ and $B$ is at least $n$.
\end{definition}

In the following lemma we show that a graph $G\sim \mathcal
G_c(n,p)$ is $k$-rainbow-pseudorandom in a ``robust" way.

\begin{lemma}\label{Gnp is k rainbow pseudo}
Let $0<\alpha<\varepsilon<1$ be two constants, let
$c=(1+\varepsilon)n$ and let $\frac{\log n}n\leq p\leq \frac{2\log
n}n$. Then a graph $G\sim \mathcal G_c(n,p)$ is w.h.p.\ such that
the following holds. For every subset $\mathcal C^*\subseteq [c]$ of
size $|\mathcal C^*|\geq (1+\alpha)n$, the graph $G[[n];\mathcal
C^*]$ is $k$-rainbow-pseudorandom for $k=\frac{n}{\log^{0.49}n}$.
\end{lemma}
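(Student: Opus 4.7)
The plan is to imitate the proof of property $(P6)$ of Lemma~\ref{lemma:Properties}, but with two additional layers in the union bound: one over the candidate color set $\mathcal{C}^*$ of size $(1+\alpha)n$, and one over a ``missing'' sub-collection $\mathcal{C}' \subseteq \mathcal{C}^*$ of size $\alpha n$. The key observation is that the failure of $k$-rainbow-pseudorandomness for a pair $(A,B)$ and a color set $\mathcal{C}^*$ means that fewer than $n$ colors from $\mathcal{C}^*$ appear on $E(A,B)$, which in turn means that at least $|\mathcal{C}^*| - n \geq \alpha n$ colors of $\mathcal{C}^*$ fail to appear there; any $\alpha n$-element subset $\mathcal{C}'$ of these missing colors then witnesses a ``bad event''.

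Accordingly, I would fix a 4-tuple $(A,B,\mathcal{C}^*,\mathcal{C}')$ with $A,B\subseteq[n]$ disjoint of size $k$, $\mathcal{C}^*\subseteq[c]$ of size $(1+\alpha)n$, and $\mathcal{C}'\subseteq\mathcal{C}^*$ of size $\alpha n$, and estimate the probability that no color of $\mathcal{C}'$ appears on any of the $k^2$ potential $A$-$B$ edges. For each such potential edge, the probability of being absent or receiving a color outside $\mathcal{C}'$ is $1-p\cdot\frac{\alpha n}{c}=1-\frac{\alpha p}{1+\varepsilon}$, and these events are independent across distinct edges, so the bad event has probability at most
$$\exp\!\left(-\tfrac{\alpha}{1+\varepsilon}\,p\,k^{2}\right).$$
Since $p\geq \log n/n$ and $k=n/\log^{0.49}n$, we have $pk^{2}\geq n\log^{0.02}n$, so this probability is at most $\exp(-\Omega(n\log^{0.02}n))$.

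For the union bound, I would estimate the number of 4-tuples. The number of choices for $A$ and $B$ is at most $\binom{n}{k}^{2}=\exp(O(k\log\log n))=\exp(o(n))$. The number of choices for $\mathcal{C}^*$ is at most $2^{c}=2^{(1+\varepsilon)n}$, and for each $\mathcal{C}^*$ the number of choices for $\mathcal{C}'$ is at most $2^{(1+\alpha)n}$. Thus the total number of tuples is $\exp(O(n))$, which is negligible compared to the bound $\exp(\Omega(n\log^{0.02}n))$ on an individual failure probability. Summing over all 4-tuples yields a total bad probability of $o(1)$.

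Finally, on the event that no bad 4-tuple occurs, fix any $\mathcal{C}^*$ with $|\mathcal{C}^*|\geq (1+\alpha)n$ and any disjoint $A,B$ of size $k$; if strictly fewer than $n$ colors of $\mathcal{C}^*$ appeared on $E(A,B)$, we could select $\alpha n$ missing colors to form a $\mathcal{C}'$ witnessing a bad event, a contradiction. Hence $G[[n];\mathcal{C}^*]$ is $k$-rainbow-pseudorandom. The only delicate point is the quantitative check that the polynomial-in-$n$ factors from $p$ and the $\binom{n}{k}^{2}$, $2^{(1+\varepsilon)n}$, $2^{(1+\alpha)n}$ factors are all dominated by the $\exp(-\Omega(n\log^{0.02}n))$ individual bound, and this works comfortably because $k$ was chosen slightly below $n/\sqrt{\log n}$ precisely so that $pk^{2}$ grows just faster than $n$.
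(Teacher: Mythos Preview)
Your argument is correct and is essentially the same computation that underlies the paper's proof of $(P6)$, which the paper simply cites here. The one simplification you are missing is that the union bound over $\mathcal{C}^*$ is unnecessary: once you know (as $(P6)$ asserts) that at least $(1+\varepsilon-o(1))n$ colors from the full palette $[c]$ appear on $E(A,B)$, then for any $\mathcal{C}^*$ with $|\mathcal{C}^*|\ge(1+\alpha)n$ at most $(\varepsilon-\alpha)n$ of those colors lie outside $\mathcal{C}^*$, leaving at least $(1+\alpha-o(1))n\ge n$ colors of $\mathcal{C}^*$ on $E(A,B)$; equivalently, in your formulation the bad event depends only on $(A,B,\mathcal{C}')$ and not on $\mathcal{C}^*$, so that layer of the union bound can be dropped.
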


\begin{proof}
  Follows immediately from Property $(P6)$ of Lemma
  \ref{lemma:Properties}.
\end{proof}

Now we state a modification of Lemma 4.4 from
\cite{BKS}.

\begin{lemma}\label{DFS}
Let $G$ be an edge-colored graph on $n$ vertices which is
$k$-pseudorandom. Then $G$ contains a rainbow path of length at
least $n-2k+1$.
\end{lemma}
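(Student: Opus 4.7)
The approach is a modified depth-first search (DFS) on $G$ that maintains a rainbow path, following the blueprint of Lemma 4.4 in \cite{BKS}. Maintain a partition $V(G) = A\cup B\cup C$, where $B$ is a stack whose elements, from bottom to top, spell out a rainbow path $P$, $A$ is the set of already-discarded vertices, and $C$ is the set of vertices not yet visited. Initialize $A = B = \emptyset$ and $C = V(G)$. At each step, if $B$ is empty, push an arbitrary vertex of $C$ onto $B$; otherwise, letting $v$ be the top of $B$, search for some $u \in C$ with $vu \in E(G)$ and $c(vu)\notin \mathrm{col}(P)$, and push such a $u$ onto $B$ if one exists; else pop $v$ into $A$. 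By construction, $P$ remains rainbow throughout the process, $|A|$ is non-decreasing, and $|C|$ is non-increasing.

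The main assertion is that at some moment $|B| \geq n - 2k + 1$, which produces the desired rainbow path. Suppose for contradiction this never happens, so $|A| + |C| \geq 2k$ at every step. Since $|A|$ grows from $0$ and eventually exceeds $k$, there is a first time $t^*$ at which $|A_{t^*}| = k$; then $|C_{t^*}| \geq k$. Set $A^* := A_{t^*}$ and choose arbitrary $C^* \subseteq C_{t^*}$ with $|C^*| = k$. I will show that the edges of $G$ with one endpoint in $A^*$ and the other in $C^*$ carry fewer than $n$ distinct colors, contradicting the $k$-rainbow-pseudorandomness of $G$.

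For each $a\in A^*$ let $t_a$ denote the time at which $a$ was popped; then $t_a\leq t^*$ and $C^*\subseteq C_{t_a}$ (as $C$ shrinks over time). Since $a$ was popped rather than extended at time $t_a$, every edge $au$ with $u\in C_{t_a}$ satisfies $c(au)\in \mathrm{col}(P_{t_a})$, where $P_{t_a}$ is the path on the stack at the pop instant. Hence the colors on $E_G(A^*, C^*)$ lie in $\bigcup_{a\in A^*} \mathrm{col}(P_{t_a})$, which in turn is contained in $\Phi$, the set of all colors ever placed on the stack up to time $t^*$. Finally, $|\Phi|$ is bounded by the total number of edges ever added to the stack, which equals the number of pushes minus the number of restarts; since each vertex is pushed at most once, the number of pushes up to $t^*$ equals $|A_{t^*}|+|B_{t^*}|$, and at least one restart has occurred (the initial one), yielding
$$|\Phi|\leq |A_{t^*}|+|B_{t^*}|-1\leq k+(n-2k)-1=n-k-1<n,$$
the desired contradiction.

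The main obstacle is the color-counting step, i.e.\ bounding the number of distinct colors that can appear on edges between $A^*$ and $C^*$. The crucial idea is that each such color must appear on the stack at the instant of popping the corresponding $A^*$-endpoint, so the entire collection is a subset of the colors ever visited by the stack; this ``color history'' is controlled by the total number of pushes, which is exactly $|A|+|B|\leq n - k$ at time $t^*$, producing the bound below $n$ needed to contradict $k$-rainbow-pseudorandomness.
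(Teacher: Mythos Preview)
Your proposal is correct and is precisely the DFS argument the paper has in mind: the paper's ``proof'' simply points to Lemma~4.4 of \cite{BKS} and says the modifications are minor, and what you wrote is exactly that rainbow adaptation of the BKS depth-first search. The only quibble is a harmless off-by-one: with $|B|\le n-2k$ you obtain a rainbow path on $n-2k+1$ \emph{vertices}, i.e.\ of length $n-2k$ if ``length'' counts edges; this discrepancy is immaterial for the application and may simply reflect the paper's convention.
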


\begin{proof}
  The proof is more or less identical to the proof of Lemma 4.4 in \cite{BKS}
  with some minor changes which are left to the reader.
\end{proof}

\subsection{Expander graphs}

Here we show that the union of few expander graphs yields an
expander graph as well.

First, we show that given an expander graph, by adding vertex
disjoint stars to it one cannot harm the expansion properties too
much.
\begin{lemma}\label{adding stars}
Let $G$ be a graph, let $m>1$, and let $k$ be a positive integer.
Let $S\subseteq V(G)$ be a subset of vertices for which there exists
an $m$-matching from $S$ to $V(G)\setminus S$. If $G[V(G)\setminus
S]$ is a $(k,m)$-expander, then $G$ is a $(k,(m-1)/2)$-expander.
\end{lemma}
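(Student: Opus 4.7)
The plan is to partition an arbitrary set $X \subseteq V(G)$ with $|X| \le k$ into its parts $X_S := X \cap S$ and $X_T := X \cap (V(G)\setminus S)$, derive two separate lower bounds on $|N_G(X) \setminus X|$, and then average them.

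For the first bound, I would use the expansion of $G[V(G)\setminus S]$ applied to $X_T$. Since $|X_T| \le |X| \le k$, the assumption gives $|N_{G[V(G)\setminus S]}(X_T) \setminus X_T| \ge m|X_T|$. Crucially, these neighbors live in $V(G)\setminus S$, hence are disjoint from $X_S \subseteq S$, so they all lie in $N_G(X) \setminus X$. This yields $|N_G(X)\setminus X| \ge m|X_T|$.

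For the second bound, I would use the $m$-matching $M$ from $S$ to $V(G)\setminus S$. Each $s \in X_S$ has $m$ neighbors in $V(G)\setminus S$ via $M$, and because $M$ has degree at most $1$ on $V(G)\setminus S$, these $m|X_S|$ neighbors are pairwise distinct. At most $|X_T|$ of them can actually lie inside $X_T$, so at least $m|X_S| - |X_T|$ of them lie in $(V(G)\setminus S) \setminus X_T \subseteq V(G)\setminus X$. Hence $|N_G(X)\setminus X| \ge m|X_S| - |X_T|$ (trivially also, since the LHS is nonnegative).

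To conclude, since the maximum of two numbers is at least their average,
\[
|N_G(X)\setminus X| \ge \tfrac{1}{2}\bigl(m|X_T| + m|X_S| - |X_T|\bigr) = \tfrac{m|X| - |X_T|}{2} \ge \tfrac{(m-1)|X|}{2},
\]
using $|X_T| \le |X|$ in the last step. This shows $G$ is a $(k,(m-1)/2)$-expander, as required. There is no real obstacle here; the only thing to be careful about is verifying that the neighbors produced by each of the two bounds indeed avoid $X$ entirely, which follows because the $G[V(G)\setminus S]$-neighbors miss $S \supseteq X_S$ and the matching-neighbors miss $S \supseteq X_S$ automatically (they lie in $V(G)\setminus S$), with the $X_T$ overlap handled by the crude subtraction of $|X_T|$.
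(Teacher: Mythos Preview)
Your proof is correct and uses the same two ingredients as the paper: the expansion bound $|N_G(X)\setminus X|\ge m|X_T|$ coming from $G[V(G)\setminus S]$, and the matching bound $|N_G(X)\setminus X|\ge m|X_S|-|X_T|$. The only cosmetic difference is that the paper splits into the cases $|X_S|\le |X|/2$ and $|X_S|>|X|/2$ and applies one bound in each case, whereas you take the maximum (via the average) of the two bounds directly; this is a slightly cleaner packaging of the same argument.
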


\begin{proof} Let $X\subseteq V(G)$ of size $|X|\leq k$, and we wish
to show that $|N_G(X)\setminus X|\geq \frac{m-1}{2}|X|$. Let us
distinguish between the following two cases:

{\bf Case I:} $|X\cap S|\leq |X|/2$. In this case, since
$G[V(G)\setminus S]$ is a $(k,m)$, it follows that $X\setminus S$
expands by a factor of $c$ and therefore $|N_G(X)\setminus X|\geq
\frac{(m-1)|X|}{2}$.

{\bf Case II:} $|X\cap S|>|X|/2$. In this case, since there exists
an $m$-matching from $S$ to $V(G)\setminus S$, hence
 $|N_G(X)\setminus X|\ge |N_G(X\cap S)\cap (V(G)\setminus S)|-|X\setminus S|\ge m|X\cap S|-\frac{|X|}{2}\ge \frac{(m-1)|X|}{2}$.
\end{proof}

The following simple lemma is from \cite{BFHK} (Claim 2.8).

\begin{lemma} \label{BFHK}
Let $G$ be a graph, let $m>0$, and let $k$ be a positive integer.
Let $U\subseteq V(G)$ be a subset for which $d_G(u)\geq m-1$ for
every $u\in U$, and, moreover, there is no path of length at most
$4$ in $G$ whose (possibly identical) endpoints lie in $U$. If
$G[V(G)\setminus U]$ is a $(k,m)$-expander, then $G$ is a
$(k,m-1)$-expander.
\end{lemma}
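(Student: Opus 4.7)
The plan is to take any $X \subseteq V(G)$ with $|X| \leq k$, decompose $X = X_U \sqcup X_R$ with $X_U = X \cap U$ and $X_R = X \setminus U$, and exhibit $(m-1)|X|$ neighbors outside $X$ by combining two sources. Applying the $(k,m)$-expansion of $G[V(G) \setminus U]$ to $X_R$ (legal since $|X_R|\le|X|\le k$) yields $A := N_G(X_R) \setminus X_R \setminus U$ with $|A| \geq m|X_R|$; since $A \subseteq V(G) \setminus U$, it is disjoint from both $X_U$ and $X_R$, so $A \subseteq N_G(X) \setminus X$.

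Next, set $B := N_G(X_U)$. The no-length-$\leq 2$-path condition forces $U$ to be independent and the neighborhoods $N_G(u)$ of distinct $u \in U$ to be pairwise disjoint, so $B \subseteq V(G) \setminus U$ and $|B| = \sum_{u \in X_U} d_G(u) \geq (m-1)|X_U|$. Moreover every $v \in B$ has a \emph{unique} $U$-neighbor $u(v) \in X_U$. Since $B \cap X_U = \emptyset$, inclusion-exclusion gives
\[
|N_G(X) \setminus X| \;\geq\; |A \cup (B \setminus X_R)| \;=\; |A| + |B| - |B \cap X_R| - |A \cap B|.
\]

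The crux is the sharp bound $|A \cap B| \leq |X_R| - |B \cap X_R|$, which uses the length-$3$ and length-$4$ parts of the hypothesis. First, each $r \in X_R$ has at most one neighbor in $B$: two such neighbors $v_1 \neq v_2$ produce either the $4$-cycle $u(v_1) v_1 r v_2 u(v_1)$ through $u(v_1) \in U$ (when $u(v_1) = u(v_2)$) or a length-$4$ path between the distinct $U$-vertices $u(v_1), u(v_2)$. Second, no $r \in B \cap X_R$ can be adjacent to any $v \in A \cap B$: letting $u'$ be the unique $X_U$-neighbor of $r$ and $u = u(v)$, the cases $u = u'$ and $u \neq u'$ respectively give a triangle through $u$ and a length-$3$ path $u v r u'$ between distinct vertices of $U$ — all forbidden. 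Hence the map $A \cap B \to X_R$ sending $v$ to a witnessing neighbor in $X_R$ is a well-defined injection (by the first claim) whose image avoids $B \cap X_R$ (by the second), giving the claimed bound.

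Plugging this in,
\[
|N_G(X) \setminus X| \;\geq\; m|X_R| + (m-1)|X_U| - |B \cap X_R| - \bigl(|X_R| - |B \cap X_R|\bigr) \;=\; (m-1)|X|,
\]
so $G$ is a $(k, m-1)$-expander. The main subtlety is arranging the short-path case analysis so that the loss $|B \cap X_R|$ on the $B$-side is exactly compensated by the sharper bound on $|A \cap B|$; this requires exploiting both short cycles (triangles and $4$-cycles through a single $U$-vertex) and paths of lengths $3$ and $4$ between distinct $U$-vertices, which is precisely the strength of the hypothesis.
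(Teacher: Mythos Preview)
Your argument is correct. Note, however, that the paper does not actually prove this lemma: it is simply quoted as Claim~2.8 of \cite{BFHK}, so there is no in-paper proof to compare against.

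For the record, the steps all check out. The decomposition $X=X_U\sqcup X_R$ is the natural one; the expansion of $X_R$ inside $G[V(G)\setminus U]$ gives $|A|\ge m|X_R|$, and the disjointness of the neighborhoods $N_G(u)$ for $u\in U$ (forced by the ``no path of length $\le 2$'' part of the hypothesis) gives $|B|\ge (m-1)|X_U|$. The key inequality $|A\cap B|\le |X_R|-|B\cap X_R|$ is exactly where the length-$3$ and length-$4$ forbidden configurations enter: the injection $A\cap B\to X_R$, $v\mapsto$ (a neighbor of $v$ in $X_R$), is well defined since $A\subseteq N_G(X_R)$, is injective because two distinct $B$-neighbors of some $r\in X_R$ would yield either a $4$-cycle through a single $U$-vertex or a length-$4$ path between two $U$-vertices, and its image misses $B\cap X_R$ because an edge from $A\cap B$ into $B\cap X_R$ would yield a triangle through a $U$-vertex or a length-$3$ path between two $U$-vertices. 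The final bookkeeping
\[
|N_G(X)\setminus X|\ \ge\ |A|+|B|-|B\cap X_R|-|A\cap B|\ \ge\ (m-1)|X|
\]
is then immediate. Your reading of ``path with possibly identical endpoints'' as including short cycles through a vertex of $U$ is the intended one.
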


\subsection{Boosters}

In the proof of Theorem \ref{main} we need to find a Hamilton path
between two designated vertices $x'$ and $y'$ in a sparse expander
subgraph $G_1$ of a typical $G\sim \mathcal G_c(n,p)$. Moreover, we
need such a Hamilton path to be rainbow within a prescribed subset of colors. In this section we show how
to achieve such a goal.

A routine way to turn a non-Hamiltonian expander graph $G_1$ into a
Hamiltonian graph is by using \emph{boosters}. A booster is a
non-edge $e$ of $G_1$ such that the addition of $e$ to $G_1$
decreases the number of connected components of $G_1$, or creates a
path which is longer than a longest path of $G_1$, or
 turns $G_1$ into a Hamiltonian graph. In order to turn $G_1$ into a
Hamiltonian graph, we start by adding a booster $e$ of $G_1$. If the
new graph $G_1\cup \{e\}$ is not Hamiltonian then one can continue
by adding a booster of the new graph. Note that after at most
$2|V(G_1)|$ successive steps the process must terminate and we end
up with a Hamiltonian graph. The main point using this method is
that it is well-known (for example, see \cite{Posa}) that a
non-Hamiltonian graph $G_1$ with ``good" expansion properties has
many boosters. However, our goal is a bit different. We wish to turn
$G_1$ into a graph that contains a rainbow Hamilton path with $x'$
and $y'$ as its endpoints. In order to do so, we add one (possibly)
fake edge $x'y'$ to $G_1$, color it with a new color (which does not
belong to $\mathcal C$) and try to find a rainbow Hamilton cycle
that contains the edge $x'y'$. Then, the path obtained by deleting
this edge from the Hamilton cycle will be the desired path. For that
we need to define the notion of \emph{$e$-boosters}.

Given a graph $G_1$ and a pair $e\in \binom{V(G_1)}{2}$, consider a
path $P$ of $G_1\cup \{e\}$ of maximal length which contains $e$ as
an edge. A non-edge $e'$ of $G_1$ is called an $e$-booster if
$G_1\cup\{e,e'\}$ has fewer connected components than $G_1\cup\{e\}$
has, or contains a path $P'$ which passes through $e$ and which is
longer than $P$, or that $G_1\cup\{e,e'\}$ contains a Hamilton cycle
that uses $e$. The following lemma is from \cite{FKN} and shows that
every connected and non-Hamiltonian graph $G_1$ with ``good"
expansion properties has many $e$-boosters for every possible $e$.

\begin{lemma} \label{lem:boosters}
Let $G_1$ be a connected graph for which $|N_{G_1}(X)\setminus X|\ge
2|X|+2$ holds for every subset $X\subseteq V(G_1)$ of size $|X|\le
k$. Then, for every pair $e\in \binom{V(G_1)}{2}$ such that
$G_1\cup\{e\}$ does not contain a Hamilton cycle which uses the edge
$e$, the number of $e$-boosters for $G_1$ is at least $(k+1)^2/2$.
\end{lemma}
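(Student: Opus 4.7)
The plan is to adapt the classical Pósa rotation-extension argument to the graph $G_1':=G_1\cup\{e\}$, while insisting that the edge $e$ remain in the path throughout the rotations. Since adding an edge cannot decrease expansion, $G_1'$ still satisfies $|N_{G_1'}(X)\setminus X|\ge 2|X|+2$ for every $X\subseteq V(G_1')$ with $|X|\le k$. By hypothesis $G_1'$ contains no Hamilton cycle through $e$. Fix a longest path $P=v_0 v_1\cdots v_{m-1}$ in $G_1'$ that contains $e$, say $e=v_jv_{j+1}$. Note that a Pósa rotation at $v_i$ from the endpoint $v_0$ (i.e., using the chord $v_0v_i$) deletes exactly the edge $v_{i-1}v_i$ from $P$, and hence preserves $e$ as long as $i\ne j+1$.

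Next I would run the standard Pósa rotation analysis from $v_0$ using only $e$-preserving rotations, keeping $v_{m-1}$ as the right endpoint, and let $S$ be the set of vertices obtainable as the new left endpoint of some longest $e$-containing path. The usual charging (every $w\in N_{G_1'}(S)\setminus S$ is accounted for by at most two consecutive positions on a rotated path) yields $|N_{G_1'}(S)\setminus S|\le 2|S|+1$, where the extra $+1$ absorbs the single forbidden rotation at $v_{j+1}$; comparing this with the expansion $|N_{G_1'}(X)\setminus X|\ge 2|X|+2$ for $|X|\le k$ forces $|S|\ge k+1$. For each $u\in S$ I would fix a longest $e$-containing path $P_u$ with endpoints $u$ and $v_{m-1}$ and repeat the argument, this time rotating from $v_{m-1}$ with $u$ held fixed, to obtain a set $T_u\subseteq V(G_1')$ of reachable right endpoints with $|T_u|\ge k+1$. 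Varying $u$ over $S$ produces at least $(k+1)^2/2$ unordered pairs $\{u,t\}$ each of which is realised as the pair of endpoints of some longest $e$-containing path $P_{u,t}$ in $G_1'$.

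To conclude I would check that for every such pair, $ut$ is a non-edge of $G_1$ and is an $e$-booster. Indeed, if $ut\in E(G_1')$ then $P_{u,t}+ut$ would be a cycle $C$ containing $e$; if $C$ were Hamiltonian this would contradict the hypothesis that $G_1'$ has no Hamilton cycle through $e$, while otherwise the connectivity of $G_1$ would produce a vertex outside $C$ adjacent to $C$, yielding a strictly longer $e$-containing path and contradicting the maximality of $P$. Hence $ut\notin E(G_1')$, and exactly the same dichotomy, now applied to $G_1\cup\{e,ut\}$, shows that adding $ut$ either creates a Hamilton cycle through $e$ or a longer $e$-containing path, so $ut$ is an $e$-booster as required. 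The main technical point is to verify carefully that forbidding the single rotation that would destroy $e$ costs at most one unit in the Pósa neighbourhood inequality, so that the slightly strengthened hypothesis $|N(X)\setminus X|\ge 2|X|+2$ (in place of the standard $\ge 2|X|+1$) is still enough to drive the rotation argument through.
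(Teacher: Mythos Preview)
The paper does not actually prove this lemma: it is quoted verbatim from \cite{FKN} and no argument is given here. So there is no ``paper's own proof'' to compare against; your task reduces to whether your sketch is a valid proof, and it essentially is --- the P\'osa rotation argument with the extra constraint that $e$ must stay on the path is exactly the intended approach (and is what \cite{FKN} does).

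One point of your write-up is slightly misleading and worth tightening. You speak of ``the single forbidden rotation at $v_{j+1}$'', but after the first rotation the position of $e$ on the current path changes, so the forbidden pivot is a different vertex at each stage; in fact over all rotated paths the forbidden pivot can be either $v_j$ or $v_{j+1}$, depending on the current orientation of $e$. The correct accounting is: for $v_t\in N_{G_1'}(S)\setminus S$ the usual case analysis shows $v_{t-1}\in S$ or $v_{t+1}\in S$, \emph{unless} the $s$-side $P_s$-neighbour of $v_t$ is the other endpoint of $e$, which forces $v_t\in\{v_j,v_{j+1}\}$. Hence $N_{G_1'}(S)\setminus S\subseteq (S^-\cup S^+\cup\{v_j,v_{j+1}\})\setminus S$. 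Since $v_0\in S$ one has $|S^-|\le |S|-1$ and $|S^+|\le |S|$, so $|N_{G_1'}(S)\setminus S|\le (2|S|-1)+2=2|S|+1$, and the hypothesis $|N(X)\setminus X|\ge 2|X|+2$ forces $|S|\ge k+1$ as you claim. (You should also record, as you implicitly use, that no $s\in S$ is adjacent to $v_{m-1}$; otherwise $P_s+sv_{m-1}$ is a cycle through $e$, and connectivity of $G_1$ then yields either a Hamilton cycle through $e$ or a longer $e$-path, contradicting maximality.) With this correction your two-round rotation argument and the counting of at least $(k+1)^2/2$ endpoint pairs, together with your final paragraph showing each such pair is a non-edge of $G_1$ and an $e$-booster, go through.
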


\begin{remark}\label{rem:boosters}
The proof of Lemma \ref{lem:boosters} can be easily modified (in
fact, the same proof holds) for the following case. $G_1$ is a graph
obtained by adding not too many (say, at most $\text{\emph{polylog}}
n$) vertices of degree $2$, any two of them are far apart (say, of
distance at least $3$), to a $(k,3)$ expander, and $e$ is not
incident with any of these vertices.

As another remark, note that for a $(k,2)$-expander, we trivially
have that each connected component is of size larger than $k$,
and therefore, if the graph is not
connected, then there are at least $(k+1)^2$ boosters which decrease
the number of connected components.
\end{remark}

Note that in order to turn a rainbow graph $G_1$ into a graph that
contains a rainbow Hamiltonian cycle passing through $e$, one should
repeatedly add $e$-boosters, one by one, every time adding a
booster with an unused color, at most $2|V(G_1)|$ times. Therefore,
we wish to show that a graph $G\sim G_c(n,p)$ typically contains
``many" $e$-boosters of ``many" colors for every sparse expander
subgraph $G_1$ and every pair $e\in \binom{V(G_1)}{2}$.

\begin{lemma}\label{rainbow boosters}
Let $0<\varepsilon<1,\beta>0$, let $c=(1+\varepsilon)n$ and let
$\frac{\log n}n\leq p\leq \frac{2\log n}{n}$. Then a graph $G\sim
\mathcal G_c(n,p)$ is w.h.p.\ such that the following holds. Suppose
that
\begin{enumerate}[$(i)$]
\item $G_1\subseteq G$ is any subgraph with $\frac{n}{\log\log n}\leq |V(G_1)|\leq \frac{2n}{\log\log n}$ and
$|E(G_1)|=\Theta(n/\log\log n)$ which is an
$(\beta|V(G_1)|,2)$-expander, and
\item $e\in \binom{V(G_1)}{2}$ is any pair which is
not incident with vertices of degree $2$ in $G_1$, and
\item $\mathcal C_2\subseteq [c]$ is a subset of size at least
$\varepsilon n/100$,
\end{enumerate}
then, $G$ contains $e$-boosters for $G_1$ assigned with colors from
$\mathcal C_2$.
\end{lemma}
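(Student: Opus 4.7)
The plan is to first use the expansion hypothesis to produce a large pool of $e$-boosters for $G_1$, and then show via a union bound over admissible triples $(G_1, e, \mathcal{C}_2)$ that w.h.p.\ at least one pair in this pool is realized by $G$ as an edge with a color in $\mathcal{C}_2$. By Lemma~\ref{lem:boosters} together with Remark~\ref{rem:boosters} (the second paragraph of the remark handles the disconnected case of $(k,2)$-expanders, while a mild strengthening of the expansion absorbs the ``$+2$'' slack in the connected case), the $(\beta |V(G_1)|, 2)$-expansion of $G_1$ forces the set $B=B(G_1,e)$ of $e$-boosters for $G_1$ to satisfy $|B|\ge (k+1)^2/2$ with $k=\beta|V(G_1)|$, and in particular $|B|=\Omega\bigl(n^2/(\log\log n)^2\bigr)$.

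Fix a candidate $(G_1,e,\mathcal{C}_2)$; we may assume $|\mathcal{C}_2|=\varepsilon n/100$ exactly, since enlarging $\mathcal{C}_2$ only makes the conclusion easier. I would bound the probability that $G_1\subseteq G$ while no pair in $B$ appears as a $\mathcal{C}_2$-colored edge of $G$. Since $B$ consists of non-edges of $G_1$, these two events are determined by disjoint edge-slots of $K_n$ and are therefore independent in the product measure underlying $\mathcal{G}_c(n,p)$. The first has probability $p^{m_E}$, where $m_E=|E(G_1)|$, while the second is at most $(1-q)^{|B|}$ with $q:=p|\mathcal{C}_2|/c\ge \varepsilon p/200$, yielding $(1-q)^{|B|}\le \exp\!\bigl(-\Omega(n\log n/(\log\log n)^2)\bigr)$.

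It then remains to carry out the union bound. Standard estimates give $\binom{n}{m_V}=e^{o(n)}$, a polynomial number of choices for $e$, at most $2^{(1+\varepsilon)n}$ choices for $\mathcal{C}_2$, and the critical enumeration of the edge set of $G_1$ satisfies $\binom{\binom{m_V}{2}}{m_E}\,p^{m_E}\le (e\,m_V^{\,2}\, p/(2m_E))^{m_E}=e^{O(n)}$, because $m_V^{\,2} p/m_E=\Theta(\log n/\log\log n)$ and $m_E=\Theta(n/\log\log n)$. Thus the total enumeration contributes at most $e^{O(n)}$, which is dominated by the super-linear factor $\exp(-\Omega(n\log n/(\log\log n)^2))$ from the booster bound; summing over the polynomially many values of $m_V$ and $m_E$ in the admissible ranges then yields the desired $o(1)$ estimate. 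The only real obstacle is ensuring that the number of $e$-boosters grows quadratically in $|V(G_1)|$, since precisely this quadratic bound is needed to beat the entropy of enumerating the edge set of $G_1$; fortunately Lemma~\ref{lem:boosters} provides this with room to spare, and the rest of the argument is a routine entropy-vs-probability comparison.
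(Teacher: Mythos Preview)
Your proposal is correct and follows essentially the same route as the paper: you invoke Lemma~\ref{lem:boosters} and Remark~\ref{rem:boosters} to get $\Omega(n^2/(\log\log n)^2)$ many $e$-boosters, bound the probability that none of them is present in $G$ with a color from $\mathcal{C}_2$ by $\exp(-\Omega(n\log n/(\log\log n)^2))$, and then union bound over choices of $V(G_1)$, $E(G_1)$, $e$, and $\mathcal{C}_2$, absorbing the $p^{m_E}$ factor into the enumeration of $G_1$'s edge set. The paper's proof is organized identically, with the same entropy estimates (in particular the key observation that $\binom{\binom{m_V}{2}}{m_E}p^{m_E}=e^{O(n)}$) and the same dominating super-linear exponent from the booster bound.
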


\begin{proof} Note first that by Remark \ref{rem:boosters} after Lemma
\ref{lem:boosters}, there are at least $\beta^2|V(G_1)|^2/2\geq
\frac{\beta^2 n^2}{2(\log\log n)^2}$ $e$-boosters for every such
$G_1$. Fix a subset $\mathcal C_2\subseteq [c]$ of size at least
$\varepsilon n/100$, and observe that the probability of $E(G)$ not
to contain any $e$-booster which is assigned with a color from
$\mathcal C_2$ is at most
$$
\left(1-p+p\frac{1+0.99\varepsilon
n}{(1+\varepsilon)n}\right)^{\frac{\beta^2n^2}{2(\log\log n)^2}}
\le(1-p)^{ \frac{\varepsilon\beta^2 n^2 }{300(\log\log n)^2}}\leq
\exp\left(-\frac{\varepsilon\beta^2 n\log n }{300(\log\log
n)^2}\right).$$

Now, taking the union bound over all subsets $V(G_1)\subseteq [n]$
of size $\frac{n}{\log\log n}\leq |V(G_1)|\leq \frac{2n}{\log\log
n}$ and over all subgraph $G_1$ of $G$ on vertex set $V(G_1)$ with
at most $\frac{Cn}{\log\log n}$ many edges (where $C$ is some fixed
constant), and over all subsets of colors $\mathcal C_2\subseteq
[c]$ of size at least $\varepsilon n/5$ we obtain that the
probability of having a counterexample is upper bounded by

\begin{align*}
&\sum_{t=n/\log\log n}^{2n/\log\log
n}2^c\binom{n}{t}\binom{\binom{t}{2}}{Ct}p^{Ct}\exp\left(-\frac{\varepsilon\beta^2
n\log n }{300(\log\log n)^2}\right)\\
&\leq \frac{2n}{\log\log
n}2^{(1+\varepsilon)n}2^n\left(\frac{enp}{C\log\log
n}\right)^{2Cn/\log\log n}\exp\left(-\frac{\varepsilon\beta^2 n\log
n }{300(\log\log n)^2}\right)\\
&\leq 8^n \exp\left(C\frac{2n}{\log\log n} \log \frac{enp}{C\log\log
n}\right)\exp\left(-\frac{\varepsilon\beta^2 n\log n }{300(\log\log
n)^2}\right)=o(1).
\end{align*}

 This completes the proof. \end{proof}

\section{Proof of Theorem \ref{main}}\label{sec:proof of main1}

In this section we prove Theorem \ref{main}.

\begin{proof} Let $G\sim \mathcal G_c(n,p)$, and let $\delta>0$ be a sufficiently small constant (to be specified later). Throughout the
proof we assume that $G$ satisfies all the properties
of the lemmas from the previous section.

Our proof strategy goes as follows. For each vertex $v\in SMALL$ let
us arbitrarily choose a set $A(v)=\{x,y\}$ of exactly two distinct
neighbors of $v$ and set $V_0=SMALL \cup \left(\bigcup_{v\in
SMALL}A(v)\right)$, and $E_0=\{vz: v\in SMALL \textrm{ and }z\in
A(v)\}$. By $(P4)$ and $(P5)$ of Lemma \ref{lemma:Properties} we have that all
the $A(v)$'s are disjoint and that $E_0$ is rainbow. Let $\mathcal
C_{small}:=\{c(e): e\in E_0\}$ denote the set of colors used in
$E_0$, and let $\mathcal C^*:=\mathcal C\setminus \mathcal
C_{small}$ be its complement. Observe that by $(P2)$  of Lemma
\ref{lemma:Properties}  for a small enough $\delta$ we have
$|\mathcal C^*|\geq (1+\varepsilon/2)n$.

Now, note that by $(P5)$ of Lemma \ref{lemma:Properties} it easily
follows that $\delta(G[V\setminus V_0])\geq \delta \log n$.
Therefore, letting $\alpha=\min\{\varepsilon/5,\delta e^{-10}\}$, by
applying Lemma \ref{lemma:vertex partition} to $G[V\setminus V_0]$
we find subsets $W\subseteq [n]\setminus V_0$ and $\mathcal
C_0,\mathcal C_1\subset \mathcal C^*$ for which

\begin{enumerate}[$(i)$]
\item $|W|=(1+o(1))\frac{n}{\log\log n}$, and
\item $\mathcal C_0\cap \mathcal C_1=\emptyset$, and
\item $|\mathcal C_0|,|\mathcal C_1|=(1+o(1))\alpha n$, and
\item for every $v\in [n]\setminus SMALL$ we have $d_{\mathcal
C_1}(v,W) \in \left(\frac{\alpha\delta\log n}{2\log\log n},\frac{20
\alpha \log n}{\log\log n}\right)$, and
\item the subgraph $H:=G[W;\mathcal C_0]$ satisfies all the properties of Lemma \ref{lemma:properties of G1}.
\end{enumerate}

In order to find the desired rainbow Hamilton cycle we proceed as
follows. First, find a rainbow path $P$ of length
$n-n/\log^{0.4}n$ in $[n]\setminus (V_0\cup W)$ whose edges
receive colors from $\mathcal C^*\setminus (\mathcal C_0\cup
\mathcal C_1)$. The existence of such a path is ensured by Lemmas
\ref{Gnp is k rainbow pseudo} and \ref{DFS}. Second, let $x,y$
denote $P$'s endpoints, define $S=\left([n]\setminus (SMALL\cup
V(P)\cup W)\right)\cup \{x,y\}$ be the set of ``unused" vertices,
and consider the bipartite graph $B:=G[S, W;\mathcal C_1]$. Lemma
\ref{lemma:ugly} ensures that $B$ contains (say) a rainbow
$9$-matching $M$ from $S$ to $W$. Let $x'$ and $y'$ be two neighbors
(in $M$) of $x$ and $y$, respectively, and define $M':=M\setminus
\{e\in M: e\cap \{x,y\}\neq \emptyset\}$.

Next, by applying Lemma \ref{RainbowExpander} to $G[W;\mathcal
C_0]$, we find a subgraph $R\subseteq G[W;\mathcal C_0]$ which
satisfies the following:

\begin{enumerate}[$(a)$]
\item $R$ is rainbow, and
\item $R$ is an $(\alpha\delta|W|/100,100)$-expander, and
\item $|E(R)|=\Theta(n/\log\log n)$.
\end{enumerate}

Now, let us define $G_1$ to be the subgraph of $G$ on vertex set
$V_1:=[n]\setminus V(P)$, with edge set $M'\cup E_0\cup E(R)$.
Note that since $R$ is an $(\alpha\delta|W|/100,100)$-expander, and
since for $S':=S\setminus \{x,y\}$, there exists a $9$-matching from
$S'$ to $W$, it follows by Lemma \ref{adding stars} that adding $S'$
and $M'$ to $R$ yields an $(\alpha\delta|W|/100,4)$-expander. Now,
since by $(P5)$ we have that vertices in $SMALL$ are far apart, by
Lemma \ref{BFHK} it follows that $G_1$ is an
$(\alpha\delta|V_1|/200,2)$-expander with $\Theta(n/\log\log n)$
edges, and is clearly rainbow. Finally, we wish to turn $G_1$ (in
$G$) into a graph which contains a rainbow Hamilton path with $x'$
and $y'$ as its endpoints. Note that both $x'$ and $y'$ are not
neighbors of vertices of degree $2$ in $G_1$. Now, one can
repeatedly apply Lemma \ref{rainbow boosters} to $G_1$ with respect
to the set of available colors to obtain a rainbow Hamilton path of
$G_1$ connecting $x'$ to $y'$ which uses only colors not
appearing on $P$.(Each time we add a booster $e$ whose color
$c(e)\in \mathcal C^*\setminus (\mathcal C_0\cup \mathcal C_1)$
has not been used before we update  the set of available colors
by excluding $c(e)$. Since $|W|=o(n)$, along the process we
 still have a linear number of colors available, and thus Lemma \ref{rainbow boosters} applies..)
A moment's thought now reveals that such a path,
together with $P$ and the edges $xx'$ and $yy'$, yields a rainbow
Hamilton cycle in $G$. This completes the proof.
\end{proof}

\section{Proof of Theorem \ref{main2}} \label{sec:proof of main2}
In this section we prove Theorem \ref{main2}.

\begin{proof}
  Let us define the following sequence $\Gamma_0,\Gamma_1,\ldots,\Gamma_N$ of random edge-colored $k$-uniform hypergraphs, where $N=\binom{n}{k}$, in the following way: Let $e_1,\ldots,e_N$
be an arbitrary enumeration of all the elements of $\binom{[n]}{k}$.
Now, in $\Gamma_i$, for every $j> i$ we add the corresponding edge
with probability $p$, independently at random and assign it all the
colors in $[c]$ (these edges can be seen as multiple edges with
multiplicity $c$). For every $j\leq i$, we add $e_j$ to $\Gamma_i$
with probability $q$, independently at random and then assign it a
unique color from $[c]$ uniformly, independently at random. Note
that $\Gamma_0\sim \mathcal H^{k}(n,p)$ while $\Gamma_N\sim \mathcal
H^{k}_c(n,q)$. Therefore, in order to complete the proof it is
enough to show that
\begin{align*}
\Pr\left[\Gamma_i \textrm{ contains some } C\in \mathcal
C\right]\geq \Pr\left[\Gamma_{i-1} \textrm{ contains some } C\in
\mathcal C \right].
\end{align*}

To this end, expose all edges but $e_i$ and its color(s) in both spaces. There are three possible
scenarios:
\begin{enumerate}[$(a)$]
\item $\Gamma_{i-1}$ contains some $C\in \mathcal C$ not using $e_i$, or
\item $\Gamma_{i-1}$ does not contain any member $C\in \mathcal C$ even if we add $e_i$ with all the possible colors, or
\item $\Gamma_{i-1}$ contains a member of $\mathcal C$ if we add $e_i$ with all the possible colors.
\end{enumerate}

Note that in $(a)$ and $(b)$ there is nothing to prove. Therefore,
it is enough to consider case $(c)$. The crucial observation here is
that if $e_i$ is needed for finding a copy of some $C\in \mathcal
C$, then since $\mathcal C$ is $\ell$-rich, it follows that at least
$\ell$ colors are valid for $e_i$ in order to obtain such a copy.
Now, the probability for $\Gamma_{i-1}$ to contain a member of
$\mathcal C$ is precisely $p$ (recall that $e_i$ is crucial for this
aim and that we add $e_i$ with all possible colors), where the
probability for $\Gamma_{i}$ to have such a copy is at least $q
\ell/c= p$. This completes the proof.
\end{proof}

\section{Applications of Theorem \ref{main2}}\label{sec:app}

In this section we show how to use Theorem \ref{main2} in order to
derive Theorems \ref{app1} and \ref{rainbow packing}. For
Theorem \ref{app1} we prove a stronger statement from
which the proof immediately follows.

\begin{theorem}
  \label{rainbow spanning}
  Let $S$ be any $k$-uniform hypergraph on $n$ vertices with $m$ edges, and let $p$
  be such that $$\Pr\left[H\sim \mathcal H^k(n,p) \text{ contains a
  copy of }S\right]=1-o(1).$$ Then, for every $\varepsilon\geq 0$,
  letting $c=(1+\varepsilon)m$ and $q=\frac{cp}{\varepsilon m+1}$, if $q\leq 1$ then  we
  have
  $$\Pr\left[H\sim \mathcal H^k_c(n,q) \text{ contains a rainbow
  }S\right]=1-o(1).$$
\end{theorem}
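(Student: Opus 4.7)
The plan is to derive this theorem as an essentially immediate corollary of Theorem \ref{main2} by choosing the $\ell$-rich collection $\mathcal C$ to consist of all rainbow copies of $S$.

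More precisely, I would let $\mathcal C := \mathcal C(c,n,k)$ be the set of all edge-colored $k$-uniform hypergraphs on vertex set $[n]$ that are isomorphic to $S$ and whose $m$ edges all receive distinct colors from $[c]$. I would then verify that $\mathcal C$ is $\ell$-rich for $\ell = \varepsilon m + 1$: given any $C \in \mathcal C$ and any edge $e \in E(C)$, to keep $C$ a rainbow copy of $S$ the color of $e$ must avoid the $m-1$ colors used on the remaining edges of $C$, which leaves $c - (m-1) = (1+\varepsilon)m - m + 1 = \varepsilon m + 1$ valid choices. Note also that by our choice of parameters $q = \frac{cp}{\varepsilon m + 1} = \frac{cp}{\ell}$, and the hypothesis $q \leq 1$ is given.

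Next, I would identify the uncolored projection $\widetilde{\mathcal C}$ with the set of all (unlabeled) copies of $S$ in $K_n^{(k)}$: every rainbow copy projects to a copy of $S$, and conversely every copy of $S$ can be rainbow-colored from $[c]$ since $c \geq m$. Thus the event ``$H \sim \mathcal H^k(n,p)$ contains some $C \in \widetilde{\mathcal C}$'' coincides with the event ``$H$ contains a copy of $S$,'' which has probability $1 - o(1)$ by assumption. Applying Theorem \ref{main2} then yields
$$1 - o(1) \;=\; \Pr\!\left[H \sim \mathcal H^k(n,p) \text{ contains a copy of }S\right] \;\leq\; \Pr\!\left[H \sim \mathcal H^k_c(n,q) \text{ contains a rainbow copy of } S\right],$$
which is exactly the desired conclusion.

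There is no real obstacle here beyond bookkeeping: the entire content of the theorem is packaged in Theorem \ref{main2}, and the only nontrivial combinatorial check is that the rich parameter $\ell$ indeed equals $\varepsilon m + 1$, which follows from a one-line counting argument as above. Theorem \ref{app1} is then immediate by specializing $S$ to a Hamilton $\ell$-cycle (so $m = m_\ell$).
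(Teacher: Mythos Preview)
Your proposal is correct and follows essentially the same approach as the paper: define $\mathcal C$ to be all rainbow copies of $S$, observe that it is $(\varepsilon m+1)$-rich by the one-line count $c-(m-1)=\varepsilon m+1$, and apply Theorem~\ref{main2}. The paper's proof is identical in substance, though it omits the explicit identification of $\widetilde{\mathcal C}$ that you spell out.
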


\begin{proof} Let $\mathcal C$ be the set of all possible rainbow
copies of $S$ on $n$ vertices with colors from $[c]$,
where $c=(1+\varepsilon)m$. Note that for any $e\in E(C)$, $C-e$ has
exactly $m-1$ edges and since there are $(1+\varepsilon)m$ colors,
it follows that there are $\varepsilon m+1$ ways to color $e$ to
obtain a rainbow copy of $S$. All in all, $\mathcal C$ is
$(\varepsilon m+1)$-rich, and therefore by applying Theorem
\ref{main2} to $\mathcal C$ we obtain the desired claim.
\end{proof}

Now we prove Theorem \ref{rainbow packing} which informally speaking
states that for $c=\omega(n)$ and $p=\omega(\log n/n)$, in a typical
$G\sim \mathcal G_c(n,p)$ one can find $(1-o(1))np/2$ edge-disjoint
Hamilton cycles, each of which is rainbow.

\begin{proof}
  First, observe that for example by the main results  of
  \cite{knox2013edge},\cite{krivelevich2012optimal}, it follows in particular that for
  $p=\omega(\log n/n)$ we have
  $$\Pr\left[G\sim \mathcal G(n,p) \text{ contains }(1-o(1))np/2
  \text{ edge-disjoint Hamilton cycles}\right]=1-o(1).$$
  Now, let $C$ be such that $\frac{Cn}{(C-1)n+1}\leq
  1+\varepsilon/2$ and let $c=Cn$. Let us define $\mathcal C$ to be the family of
  all collections $C$ of
  $(1-\varepsilon/2)np/2$ edge-disjoint Hamilton cycles, each of
  which is rainbow.
  Note that for every $C\in \mathcal C$ and every $e\in E(C)$, since
  $e$ belongs to a given rainbow Hamilton cycle, there are at most $n-1$ colors
  which are forbidden for it. Therefore, there are
  $Cn-(n-1)=(C-1)n+1$ ways to color $e$ in order to obtain an
  element of $\mathcal C$ and we conclude that $\mathcal C$ is
  $((C-1)n+1)$-rich. Now, by applying Theorem \ref{main2} for $q=\frac{Cnp}{(C-1)n+1}$ we obtain
  $$\Pr\left[G\sim \mathcal G_c(n,q)\text{ contains }
  (1-o(1))np/2 \text{ edge-disjoint rainbow Hamilton
  cycles}\right]=1-o(1).$$
  All in all, since $q\leq (1+\varepsilon/2)p$ we obtain that
  $(1-o(1))np\geq
  \frac{(1-o(1))nq}{1+\varepsilon/2}\geq (1-\varepsilon)nq$
  as desired.
\end{proof}

\end{document}